\documentclass[11pt]{article}

\usepackage[T1]{fontenc}
\usepackage[utf8]{inputenc}
\usepackage{lmodern}
\usepackage[a4paper,margin=31mm]{geometry}
\usepackage{amsmath,amssymb,amsthm,mathtools,mathrsfs}
\usepackage{booktabs,array}
\usepackage{enumitem}
\usepackage{graphicx}
\usepackage{flafter}
\usepackage{xcolor}
\usepackage{microtype}
\usepackage{hyperref}

\hypersetup{
  colorlinks=true,
  linkcolor=blue!55!black,
  citecolor=blue!55!black,
  urlcolor=blue!55!black,
  pdftitle={A Dyadic Frequency Law for a Perturbed Hofstadter Recursion},
  pdfauthor={Marco Mantovanelli},
  pdfsubject={Value frequencies and dyadic structure in a nested recurrence},
  pdfkeywords={nested recurrence, Hofstadter recursion, value frequency,
    dyadic self-similarity, binary words, plane trees}
}

\newtheorem{theorem}{Theorem}[section]
\newtheorem{proposition}[theorem]{Proposition}
\newtheorem{lemma}[theorem]{Lemma}
\newtheorem{corollary}[theorem]{Corollary}
\theoremstyle{definition}

\newtheorem{example}[theorem]{Example}
\theoremstyle{remark}
\newtheorem{remark}[theorem]{Remark}

\newcommand{\Bcal}{\mathcal B}
\newcommand{\Tcal}{\mathcal T}
\newcommand{\Acore}{\mathsf A}
\newcommand{\Ccore}{\mathsf C}
\newcommand{\Cword}{\mathcal C}
\newcommand{\Deg}{\operatorname{Deg}}
\newcommand{\Sh}{\operatorname{Sh}}
\newcommand{\ProfileBlock}{\mathscr A}
\newcommand{\ordtwo}{\nu_2}
\newcommand{\rev}{\operatorname{rev}}
\newcommand{\Interleave}{\operatorname{Interleave}}
\newcommand{\ms}[1]{\{\!\{#1\}\!\}}
\newcommand{\mplus}{\mathbin{\uplus}}

\title{A Dyadic Frequency Law for a Perturbed Hofstadter $Q$-Recursion}
\author{Marco Mantovanelli\\[.35em]
\small\href{mailto:Marco@Mantovanelli.de}{\texttt{Marco@Mantovanelli.de}}\\[-.1em]
\small\href{https://orcid.org/0009-0002-0631-293X}
{ORCID: 0009-0002-0631-293X}}
\date{}

\begin{document}
\maketitle

\begin{abstract}
We study the perturbed Hofstadter $Q$-recursion
\[
 Q(1)=Q(2)=1,
 \qquad
 Q(n)=Q\bigl(n-Q(n-1)\bigr)+Q\bigl(n-Q(n-2)\bigr)+(-1)^n.
\]
Cloitre proved that the recursion is globally well-defined and encoded its
odd- and even-indexed subsequences by exact binary arches and canonical plane
forests.  Since every value is odd, let
\[
 F(s)=\#\{n\geq 1:Q(n)=2s-1\}.
\]
We prove, for every \(k\geq0\), the multiset identity
\[
 \ms{F(s):2^k\leq s<2^{k+1}}
 =
 \ms{3+\ordtwo(j):1\leq j\leq2^k}.
\]
This is an equality of multisets and does not determine the order of the
frequencies within a block.  The proof converts frequencies into plateau
local times, folds paired gap degrees under reversal-complementation, and
identifies the result with the degree multiset of one canonical tree.  An
ordered central-pair lemma is the boundary step that makes the dyadic cut
exact.
\end{abstract}

\medskip
\noindent\textbf{Keywords.}
nested recurrence; meta-Fibonacci sequence; Mantovanelli--Hofstadter sequence;
value frequency; dyadic self-similarity; binary word; canonical plane tree.

\smallskip
\noindent\textbf{2020 MSC.}
Primary 11B37; Secondary 05C05, 68R15.

\medskip
\noindent\textbf{Version note.}
This manuscript is a substantially revised and shortened replacement for the
previous version of arXiv:2603.16111.  The earlier rank-lifting argument has
been replaced by the folded-degree proof presented here.  Two incorrect
ancillary statements from the previous version have been removed; neither
affects the statement of the main dyadic frequency law.

\section{Introduction}

In nested recurrences, previously computed values are reused to determine the arguments of subsequent recursive calls.  This makes even global existence delicate.  The classical
Hofstadter \(Q\)-recursion is the most familiar example
\cite{hofstadter1979}; related meta-Fibonacci recurrences often become
tractable only after a combinatorial encoding by trees or frequency words
\cite{conolly1989,tanny1992,balamohan2008,jackson2006,deugau2006,
ruskeydeugau2009,fox2022}.

The recurrence studied here was discovered by the author during a
computational investigation of perturbations of Hofstadter's
recursions.  It was first circulated in the first version of this
manuscript and is recorded as sequence A394051 in the OEIS
\cite{oeisA394051}. It is defined by
\begin{equation}
 Q(1)=Q(2)=1,
 \qquad
 Q(n)=Q\bigl(n-Q(n-1)\bigr)+Q\bigl(n-Q(n-2)\bigr)+(-1)^n
 \quad(n\geq3).
 \label{eq:recurrence}
\end{equation}
Cloitre first proved global well-definedness by a direct combinatorial
construction \cite{cloitre2026mantovanelli}.  A subsequent certified
finite-state induction gave an independent proof
\cite{mantovanelli2026finite}.  Cloitre's odd--even decomposition produces
two slow sequences with increments in \(\{0,1\}\), alternating positive and
negative binary arches, exact-fit interleavings, Dyck words, and canonical
plane forests.  Among other results, that work proves
\[
 Q(n)=\frac n2+O\!\left(\frac{n}{\sqrt{\log n}}\right)
\]
and proves that the error is not
\(o\bigl(n/\sqrt{\log n}\bigr)\).

Cloitre's work determines the global orbit and its binary-forest structure
but does not determine the value-frequency distribution.  The present paper
extracts a new local-time invariant from that structure.  Its main new
ingredient is the exact folding of reflected gap degrees across a dyadic cut,
including the central-pair boundary analysis.  Every term of \(Q\) is odd,
so we define the frequency of the
value \(2s-1\) by
\[
 F(s)=\#\{n\geq1:Q(n)=2s-1\}.
\]
Our main result is the following.

\begin{theorem}[Dyadic frequency law]
\label{thm:intro-frequency}
For every integer \(k\geq0\),
\begin{equation}
 \ms{F(s):2^k\leq s<2^{k+1}}
 =
 \ms{3+\ordtwo(j):1\leq j\leq2^k}.
 \label{eq:main-law-intro}
\end{equation}
Both sides of \eqref{eq:main-law-intro} are multisets.
\end{theorem}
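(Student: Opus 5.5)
The plan follows the route announced in the abstract: convert frequencies into plateau local times of Cloitre's two slow sequences, fold the resulting gap degrees under reversal-complementation, and match the folded multiset with the degree multiset of one canonical tree.

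\emph{Step 1: frequencies as plateau lengths.} Write \(a(m)=Q(2m-1)\) and \(b(m)=Q(2m)\). By Cloitre's decomposition \cite{cloitre2026mantovanelli}, both are slow sequences with increments in \(\{0,1\}\). Hence for each odd value \(v=2s-1\) the sets \(\{m:a(m)=v\}\) and \(\{m:b(m)=v\}\) are intervals (plateaus), and
\[
 F(s)=\ell_a(s)+\ell_b(s),
\]
where \(\ell_a(s)\) and \(\ell_b(s)\) are the two plateau lengths, i.e.\ the local times at level \(v\). The increment words of \(a\) and \(b\) are the binary arches. A plateau length is one plus the number of zeros between consecutive ones, so it equals one plus a gap degree in the associated Dyck word, or equivalently the degree of a node in the canonical plane forest. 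I would record this as \(F(s)=2+d_a(s)+d_b(s)\), with \(d_a,d_b\) the gap degrees, and then check \(k=0,1,2\) by hand from the initial terms.

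\emph{Step 2: dyadic blocks and reflection.} Values \(s\in[2^k,2^{k+1})\) should correspond to one full positive arch followed by one negative arch, of the kind produced at level \(k\) in Cloitre's construction. The negative arch should be the reversal-complement of the positive one, so the gap degree of \(b\) at \(s\) equals the gap degree of \(a\) at the reflected index \(s^\ast=3\cdot2^k-1-s\), up to a boundary shift. The block multiset then becomes
\[
 \ms{2+d(s)+d(s^\ast)}.
\]
Folding pairs \((s,s^\ast)\) turns this into the multiset of degrees of a single canonical tree \(T_k\): the positive arch contributes left subtrees and the reflected arch contributes right subtrees, and the two sums merge into one node degree.

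\emph{Step 3: identify \(T_k\).} I would show that \(T_k\) is the binary-carry tree on \(2^k\) nodes, in which the node indexed by \(j\) has degree \(1+\ordtwo(j)\). This follows by induction on \(k\): \(T_{k+1}\) consists of two copies of \(T_k\) joined at a root whose degree increases by one, matching
\[
 \ordtwo(2^{k+1})=\ordtwo(2^k)+1
\]
together with \(\ordtwo(2^k+j)=\ordtwo(j)\) for \(j<2^k\). Adding the constant \(2\) then gives exactly \(3+\ordtwo(j)\).

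\emph{Main obstacle: the boundary at the dyadic cut.} The two central values of the block, \(s=2^k\) and \(s=2^{k+1}-1\) (or the pair fixed by the reflection), have plateaus that straddle the arch boundaries. There, part of a plateau length may belong to the neighbouring block. I expect to need an ordered central-pair lemma stating that the extra length at the top of block \(k\) exactly compensates the deficit at its bottom. That lemma would make the cut at \(2^k\) exact rather than exact only up to \(\pm1\) at the ends. I would prove it first, by direct inspection of how Cloitre's exact-fit interleaving glues consecutive arches, and then feed it into the induction of Step 3.
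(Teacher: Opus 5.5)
Your proposal has a genuine gap: Step~2 misplaces the dyadic blocks relative to the arches, and the fold you build on that picture cannot work.

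\textbf{Block geometry and the fold.} With $a_r=(2\cdot4^{r+1}+1)/3$, the positive arch occupies the levels $a_r\le s<2a_r$ and the negative arch the levels $2a_r\le s<4a_r-1$. A positive--negative pair therefore spans a factor of four, i.e.\ two dyadic blocks; for $r=0$ these are the levels $3$--$5$ and $6$--$10$, against $\Bcal_2=\{4,\dots,7\}$ and $\Bcal_3=\{8,\dots,15\}$. Nor is the negative word the reversal-complement of the positive one, since $|N_r|=4a_r-3\neq 2a_r=|P_r|$.

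Taken at face value, your relation $d_b(s)=d_a(s^*)$ with $s^*=3\cdot2^k-1-s$ gives $F(s)=F(s^*)$ for a fixed-point-free involution of $\Bcal_k$. Every value would then occur an even number of times in the block. That contradicts the law you want, in which $k+3$ occurs once; concretely, $F(11)=6\neq5=F(12)$ and $F(8)=3\neq4=F(15)$.

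What is true is that each half-arch core $W_k$ (namely $\Acore_r$ or $\Ccore_r=\Cword(\Acore_r)$) is anti-palindromic on its own. Hence both local times at an interior level come from the \emph{same} core: $F(s_k+j)=\omega_j+\omega_{n_k+1-j}$. This profile is symmetric about $s_k+(n_k+1)/2$, and $2^k=s_{k-1}+n_{k-1}/2$ puts these centres at $2^k+\tfrac12$ and $2^{k+1}+\tfrac12$. So the dyadic cuts are \emph{midpoints} of consecutive half-arches, and $\Bcal_k$ consists of the second half of $W_{k-1}$, the single arch-start level $s_k$, and the first half of $W_k$.

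\textbf{Why a folded value is a tree degree.} Even with the correct fold, you are missing the idea that makes a folded value a tree degree. Nothing in your plan explains why $d(s)+d(s^*)$ should be a single node degree. What is needed is that exactly one of $c_j,c_{n_k+1-j}$ is positive.
\begin{itemize}
\item For $\Acore_r$, this comes from Cloitre's one-sided reflection $c_i>0\Rightarrow c_{n+1-i}=0$, combined with the count of exactly $n/2$ positive degrees. That count follows from $\Deg(W_{2r+1})=\Deg(\Tcal_1)\mplus\Deg(\Tcal_3)\mplus\cdots\mplus\Deg(\Tcal_{2r+1})$, which uses the root word $(1,3,\dots,2r+1)$.
\item For $\Ccore_r$, it comes from the gap transform together with anti-palindromicity.
\end{itemize}
Each half of $W_k$ then contributes $\Sh(W_k)=\ms{c_i-1:c_i>0}$. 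This equals $\Deg(\Tcal_0)\mplus\Deg(\Tcal_2)\mplus\cdots$ or $\Deg(\Tcal_1)\mplus\Deg(\Tcal_3)\mplus\cdots$ according to parity, and $\Sh(W_{k-1})\mplus\ms{k}\mplus\Sh(W_k)=\Deg(\Tcal_k)$. The root value $k=F(s_k)-3$ is the plateau at the arch start, computed from the extreme run lengths of the words.

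\textbf{The boundary obstacle.} The plateaus straddling two arch words are exactly those at the levels $s_k$, and these lie inside $\Bcal_k$. No plateau is ever split between blocks, so the obstacle you describe does not arise.

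The real boundary issue is different. The tail of $W_{k-1}$ contains both levels $2^k$ and $2^k+1$ of its central reflected pair, so that folded value is counted twice. The head of $W_k$ omits its own central pair. These cancel only because every central pair has degree sum $1$, i.e.\ is $(0,1)$ or $(1,0)$.

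That is a statement about the middle of a single core, not something visible from how the interleavings glue consecutive arches. For $\Acore_r$ it is proved via Cloitre's first-summit theorem and the terminal Toeplitz label $2$. For $\Ccore_r$ it is proved by a run-parity argument plus the gap transform.

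\textbf{Step~3.} Your tree is inconsistent as stated: degrees $1+\ordtwo(j)$ on $2^k$ nodes sum to $2^{k+1}-1$, not $2^k-1$. The correct target is $\Deg(\Tcal_k)=\ms{\ordtwo(j):1\le j\le2^k}$, with $F=3+\text{degree}$.
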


Thus \(3+h\) occurs \(2^{k-h-1}\) times for \(0\leq h<k\), while
\(k+3\) occurs once.  The order is not specified.  For example,
\begin{equation}
 (F(4),F(5),F(6),F(7))=(3,3,5,4),
 \label{eq:not-pointwise}
\end{equation}
whereas \((3+\ordtwo(j))_{j=1}^4=(3,4,3,5)\).  In particular, the theorem
does \emph{not} imply a pointwise ruler formula.

The proof avoids a rank-lifting or mass-closure argument.  Its main steps are:
\begin{enumerate}[label=(\roman*),leftmargin=2.2em]
\item express \(F(s)\) as the sum of two plateau lengths;
\item encode every half-arch by a balanced binary core and fold its zero-gap
      degrees under reflection;
\item prove that each reflected degree pair has exactly one positive member,
      with ordered central pair \((0,1)\) or \((1,0)\);
\item identify the folded degrees with the children of one canonical tree.
\end{enumerate}
The central-pair assertion is the boundary ingredient that makes the dyadic
cut exact.  We prove it explicitly in Lemma~\ref{lem:paired-degrees}.

The logical dependence of the proof and its consequences is
\begingroup\small
\[
 \begin{gathered}
 \text{Cloitre: binary arches and canonical forests}\\[-.5em]
 \Downarrow\\[-.5em]
 \text{plateau identity and ordered gap formulas}\\[-.5em]
 \Downarrow\\[-.5em]
 \text{reflection pairs and the ordered central pair}\\[-.5em]
 \Downarrow\\[-.5em]
 \text{folded block}=\Deg(\Tcal_k)\\[-.5em]
 \Downarrow\\[-.5em]
 \text{dyadic frequency law}\\[.15em]
 \text{Consequences only: frequency law}\Longrightarrow
 \text{block mass}\Longrightarrow\text{dyadic hitting time}.
 \end{gathered}
\]
\endgroup
In particular, neither block mass nor the hitting-time formula is used in the
frequency proof.

Figure~\ref{fig:first-values} shows the orbit on an initial range.  The
frequency theorem is the focus of the paper.  The clock, defect numerics, and
alternative seeds are retained only as short side results in
Sections~\ref{sec:clock}--\ref{sec:seeds}.

\begin{figure}[t]
  \centering
  \includegraphics[width=.88\textwidth]{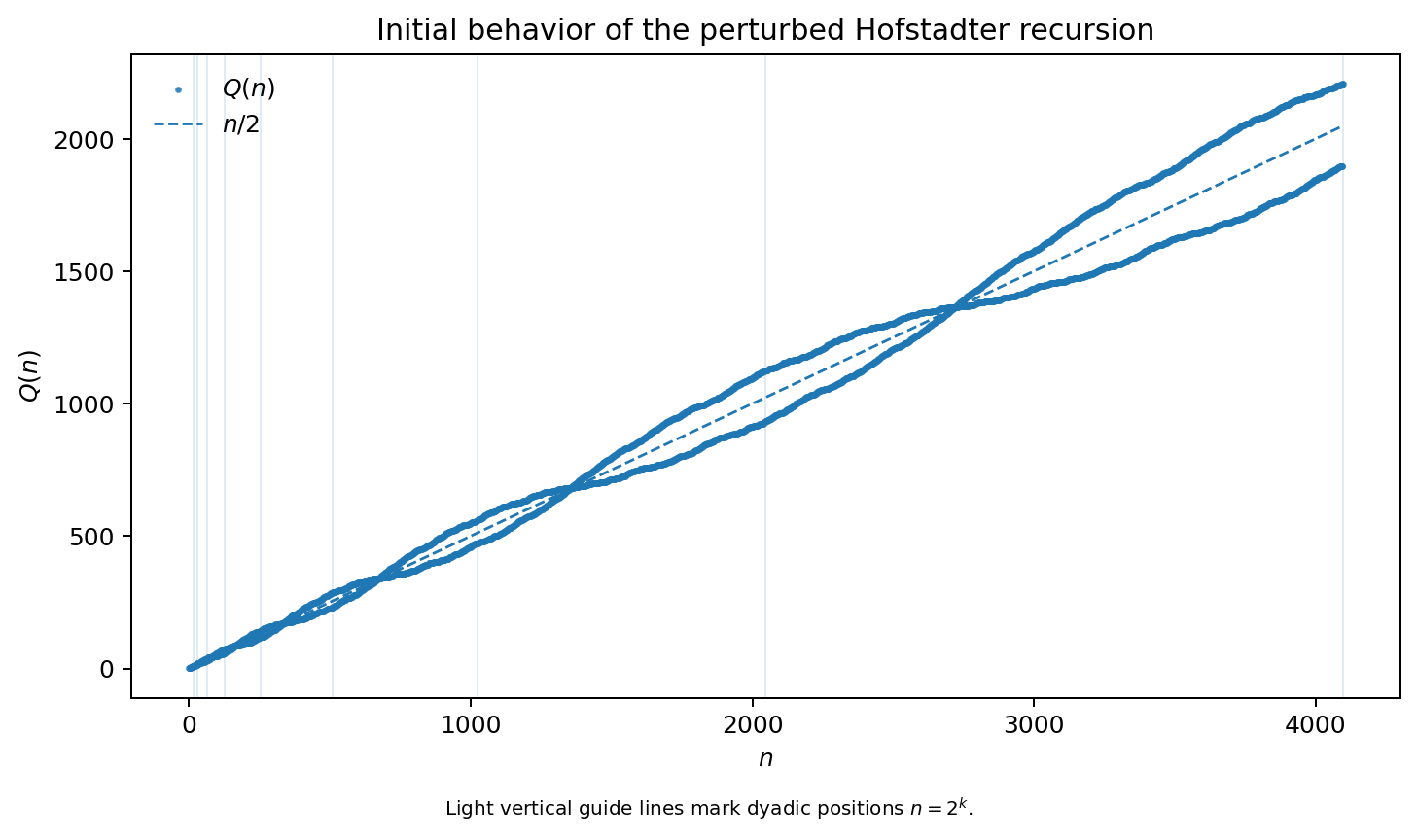}
  \caption{The canonical orbit on an initial range.  The proof below uses the
  exact binary arch construction, not visual regularity in this plot.}
  \label{fig:first-values}
\end{figure}

\section{Basic properties and dyadic geometry}
\label{sec:basic-properties}

\subsection{First values, parity, and the two slow branches}

The first terms are
\begin{equation}
\begin{aligned}
 &1,1,1,3,3,3,5,5,5,7,5,9,7,9,7,11,9,11,11,\\
 &11,13,11,15,13,15,13,17,13,19,15,19,17,\ldots .
\end{aligned}
\label{eq:first-terms}
\end{equation}
In particular, the orbit itself is neither monotone nor slowly increasing.
The slow objects are its normalized odd- and even-indexed branches:
\begin{equation}
 U(m)=\frac{Q(2m-1)+1}{2},
 \qquad
 V(m)=\frac{Q(2m)+1}{2}.
 \label{eq:UV}
\end{equation}

\begin{proposition}[Fundamental properties]
\label{prop:fundamental-properties}
The recurrence \eqref{eq:recurrence} is globally well-defined, with legal
earlier recursive arguments and
\[
 1\leq Q(n)\leq n \qquad(n\geq1).
\]
Every value \(Q(n)\) is odd.  The sequences \(U,V\) are unbounded and
nondecreasing, and
\begin{equation}
 \Delta U(m),\Delta V(m)\in\{0,1\}.
 \label{eq:basic-binary-increments}
\end{equation}
Consequently every positive odd integer occurs in \(Q\), each such value has
finite frequency, and
\[
 \lim_{n\to\infty}\frac{Q(n)}n=\frac12.
\]
\end{proposition}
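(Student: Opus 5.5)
Most of Proposition~\ref{prop:fundamental-properties} is imported from Cloitre's construction \cite{cloitre2026mantovanelli}. The remaining assertions are short consequences of it, so the plan is to isolate precisely what is quoted and derive everything else.

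\textbf{Step 1: imported facts.} From \cite{cloitre2026mantovanelli} (independently \cite{mantovanelli2026finite}) we take three things: global well-definedness; the fact that the arguments \(n-Q(n-1)\) and \(n-Q(n-2)\) are legal earlier indices, i.e.\ lie in \([1,n-1]\); and the odd--even description in which \(U\) and \(V\) are built from alternating binary arches with increments in \(\{0,1\}\), which is \eqref{eq:basic-binary-increments}. Legality of the arguments is equivalent to \(1\leq Q(n-1)\leq n-1\) and \(1\leq Q(n-2)\leq n-2\). It is convenient to carry this as part of a single strong induction together with the bound \(1\leq Q(n)\leq n\).

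\textbf{Step 2: parity and the bounds.} Oddness follows by strong induction. We have \(Q(1)=Q(2)=1\), and for \(n\geq3\) the value \(Q(n)\) is a sum of two odd earlier values plus \(\pm1\), which is odd.

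The bound \(Q(n)\ge1\) is not automatic, since \(1+1-1=1\) is the worst case for even \(n\) only in a formal sense. Positivity does follow from oddness plus \(Q(n)\geq 1+1-1\), because both summands are at least \(1\) by induction.

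For the upper bound I would avoid the recursion and use the branch structure instead. Since \(U(1)=V(1)=1\) and the increments lie in \(\{0,1\}\), we get \(U(m)\leq m\) and \(V(m)\leq m\). Hence
\[
Q(2m-1)=2U(m)-1\leq 2m-1, \qquad Q(2m)=2V(m)-1\leq 2m-1.
\]
This gives \(Q(n)\leq n\) throughout, and in the same stroke closes the induction for legality of the arguments.

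\textbf{Step 3: unboundedness, occurrence, and finite frequency.} Unboundedness of \(U\) and \(V\) follows from the growth statement \(Q(n)=n/2+O(n/\sqrt{\log n})\) quoted in the introduction; any divergence of \(Q\) along both parities would suffice.

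Every positive odd value \(2s-1\) occurs because \(U\) starts at \(1\), moves in unit steps, and is unbounded, so it takes every integer \(s\geq1\).

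For finite frequency, note that \(\{n:Q(n)=2s-1\}\) is the union of the two level sets \(\{m:U(m)=s\}\) and \(\{m:V(m)=s\}\). Each of these is a finite interval, because the branch is nondecreasing and unbounded. This is exactly the plateau viewpoint later used in step (i) of the introduction.

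\textbf{Step 4: the limit, and the main obstacle.} The limit \(Q(n)/n\to1/2\) is immediate from the asymptotic \(Q(n)=n/2+O(n/\sqrt{\log n})\).

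The main obstacle is logical rather than computational. The bounds, the legality of the arguments, and the monotonicity of \(U,V\) are intertwined, so the induction must be set up carefully to avoid circularity. I would handle this by citing Cloitre's theorem for the complete joint statement (existence together with the arch structure of \(U,V\)). Only parity, \(1\leq Q\leq n\), occurrence, and finite frequency would be derived here.
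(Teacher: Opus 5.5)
Your proposal is correct and follows the paper's route in substance. Both import global well-definedness, legality and the binary increments of $U,V$ from Cloitre, prove oddness by induction, deduce occurrence and finite frequency from the unit-step, nondecreasing, unbounded branches, and take the limit from Cloitre's asymptotic estimate. The differences are minor: you derive $1\leq Q(n)\leq n$ from $U(1)=V(1)=1$ and the binary increments rather than citing Theorem~8.4 directly, and you get unboundedness from the asymptotic rather than from the arch endpoints. Two small corrections: the worst case $1+1-1$ occurs for odd $n$, not even $n$. Also, the upper bound cannot by itself ``close the induction for legality'', since the increments are a property of the already-constructed orbit; this is harmless only because legality is imported, as you say in Step~4.
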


\begin{proof}
Global construction, legality, and the stated bounds are contained in
Cloitre's Theorem~8.4 \cite{cloitre2026mantovanelli}; a later independent
finite-state proof is given in \cite{mantovanelli2026finite}.  Once global
definition is known, oddness is immediate by induction: the two recursive
values are odd, their sum is even, and adding \((-1)^n\) gives an odd integer.
Cloitre's Lemma~7.1, together with the arch endpoint identities in his
equation~(2), proves the assertions about \(U,V\).  Since each branch starts
at one, is unbounded, and advances only by zero or one, it assumes every
positive integer and eventually leaves each fixed level.  This proves the
assertions about the values and their frequencies.  The limit follows from
Cloitre's Theorem~2.1(i), or from the stronger estimate in
Theorem~2.1(iii),
\[
 Q(n)=\frac n2+O\!\left(\frac{n}{\sqrt{\log n}}\right).
\]
\end{proof}

The raw first differences nevertheless have a simple exact interpretation.
Put
\[
 \Delta_Q(n)=Q(n+1)-Q(n),
 \qquad
 \delta(m)=V(m)-U(m),
 \qquad
 \varepsilon_m=U(m+1)-U(m).
\]
Then
\begin{equation}
 \Delta_Q(2m-1)=2\delta(m),
 \qquad
 \Delta_Q(2m)=2\bigl(\varepsilon_m-\delta(m)\bigr).
 \label{eq:first-difference-branches}
\end{equation}
Thus the two visible branches in Figure~\ref{fig:first-differences} are the
same alternating arch walk, once directly and once reflected with a one-bit
correction.  This is not a bounded-increment property of \(Q\): already
\(\Delta_Q(11)=Q(12)-Q(11)=4\).  In fact, if \(\mathcal H_r\) is the maximum height
of \(\delta\) on the \(r\)-th positive arch, Cloitre's exact amplitude law is
\begin{equation}
 \mathcal H_r=2+\sum_{j=1}^{r}\binom{2j+1}{j}
 \sim \frac{8}{3\sqrt{\pi}}\frac{4^r}{\sqrt r},
 \label{eq:arch-amplitude-overview}
\end{equation}
so \(\Delta_Q\) is unbounded \cite[Theorem~2.1(ii)]{cloitre2026mantovanelli}.

\begin{figure}[t]
  \centering
  \includegraphics[width=.94\textwidth]
    {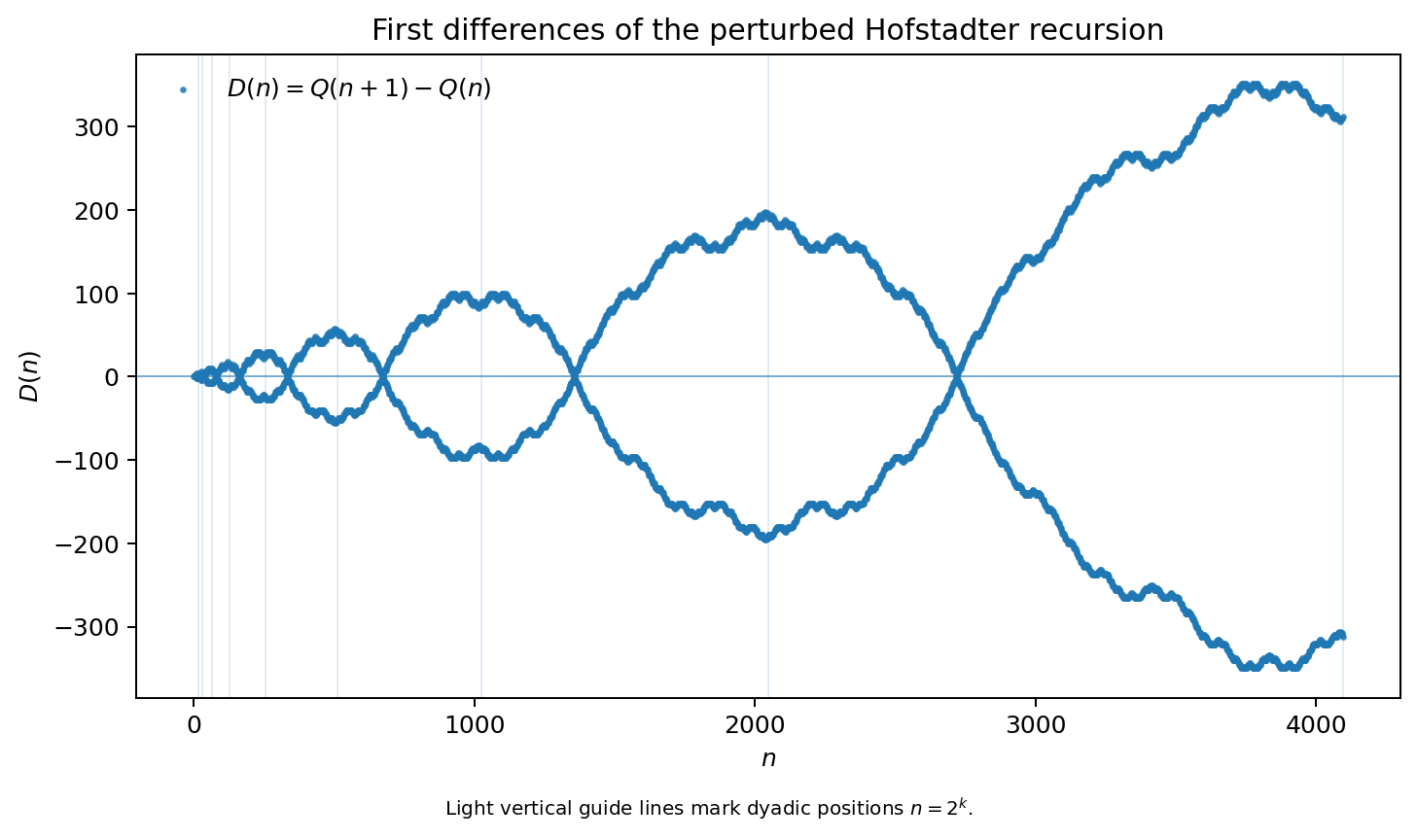}
  \caption{First differences \(\Delta_Q(n)=Q(n+1)-Q(n)\) for \(1\leq n<4096\).
  The odd-indexed points are exactly \(2\delta(m)\); the even-indexed points
  are \(-2\delta(m)\) plus a correction in \(\{0,2\}\), by
  \eqref{eq:first-difference-branches}.  The growing arches are therefore
  rigorous, not a bounded-increment phenomenon.  Light lines at \(n=2^j\)
  are scale guides, not asserted arch boundaries.}
  \label{fig:first-differences}
\end{figure}

\subsection{Exact dyadic scaling and centered-fluctuation profiles}

The self-similarity has an exact combinatorial meaning.  The positive-arch
half-lengths \(a_r\) introduced in Section~\ref{sec:framework} satisfy
\[
 a_{r+1}=4a_r-1,
\]
and the binary words of consecutive arches are related by the two exact-fit
interleavings \eqref{eq:law-one}--\eqref{eq:law-two}.  Passing from one
positive arch to the next is therefore the exact affine dyadic dilation
\(a_r\mapsto4a_r-1\), asymptotically a fourfold change of scale, accompanied
by an explicit word transformation.  This statement does not imply a
pointwise dilation identity such as \(Q(2n)=2Q(n)\).

A complementary finite-scale view comes from the centered fluctuation
function
\[
 E(n)=Q(n)-\frac n2.
\]
The two parity envelopes have exact binary slopes:
\begin{align}
 E(2m+1)-E(2m-1)&=2\Delta U(m)-1\in\{-1,1\},\nonumber\\
 E(2m+2)-E(2m)&=2\Delta V(m)-1\in\{-1,1\}.
 \label{eq:centered-parity-slopes}
\end{align}
Figure~\ref{fig:dyadic-fluctuation} displays nested dyadic subsamples of these
envelopes.  Put \(N=2^{12}\), \(M_k=2^{12-k}-1\), and
\[
 \mathcal E_k(m)=E(2^k m)=Q(2^k m)-2^{k-1}m
 \qquad(1\leq m\leq M_k).
\]
Thus profile \(k\) samples all indices divisible by \(2^k\); it is not the
class of exact 2-adic valuation \(k\).  Profile \(k=0\) contains both parity
branches, whereas every \(k\geq1\) lies on the even branch.  The lower-panel
overlap primarily visualizes this parity split and nested decimation; no
dilation law or limiting profile is inferred.

\begin{figure}[t]
  \centering
  \includegraphics[width=.96\textwidth,trim=0 0 0 21pt,clip]
    {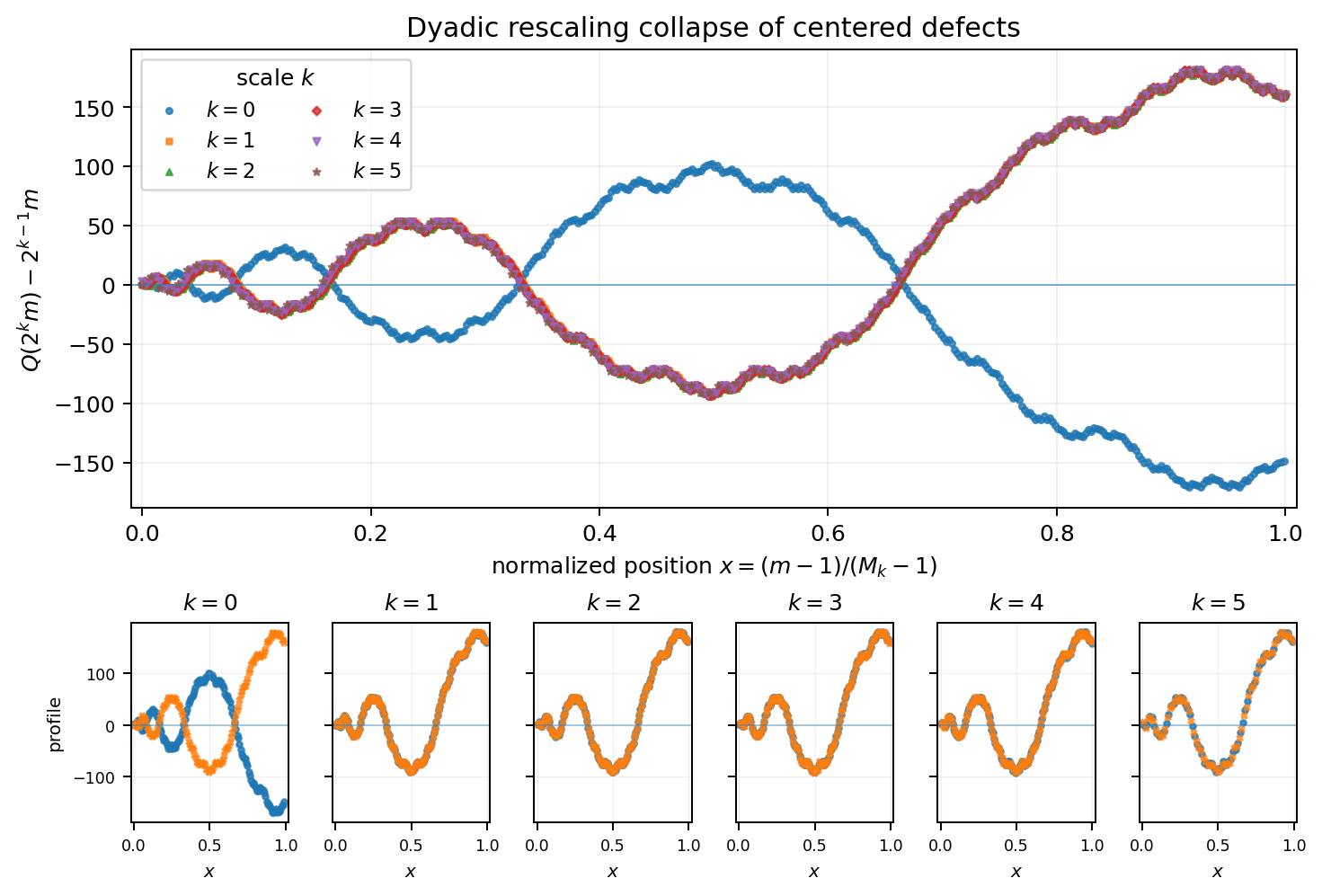}
  \caption{Nested dyadic subsamples of the centered fluctuation function
  for \(n<N=4096\).
  In the upper panel, for \(k=0,\ldots,5\) and \(1\leq m\leq M_k\), the ordinate
  is \(\mathcal E_k(m)\) and the horizontal coordinate is
  \(x=(m-1)/(M_k-1)\).  Lower panel \(k\) overlays the profiles at scales
  \(k\) and \(k+1\) on their normalized grids.  The ordinate is the centered
  fluctuation \(E(n)=Q(n)-n/2\), sampled at \(n=2^k m\).  The visible
  alignment mainly reflects nested decimation; no limiting profile is
  asserted.}
  \label{fig:dyadic-fluctuation}
\end{figure}

\section{Exact binary framework}
\label{sec:framework}

This section fixes notation and records the precise ingredients imported from
Cloitre \cite{cloitre2026mantovanelli}.  The index conventions used below are
collected here to make boundary checks explicit.

\begin{center}
\small
\begin{tabular}{@{}p{.49\textwidth}p{.39\textwidth}@{}}
\toprule
Object & Index range or convention \\
\midrule
letters \(W[t]\) & \(0\leq t<|W|\) \\
slice \(W[a:b]\) & \(a\leq t<b\), zero-based and half-open \\
zero cuts \(p_i\) & \(0\leq i\leq n\) \\
zero gaps \(\omega_i\), degrees \(c_i\) & \(1\leq i\leq n\) \\
reflection & \(\rho(i)=n+1-i\) \\
plateau indices of \(U,V\) & \(m\geq1\) \\
\bottomrule
\end{tabular}
\end{center}
Thus, in particular, \(c_{\rho(i)}=c_{n+1-i}\).

\subsection{The odd--even split and the arches}

Recall the normalized branches \(U,V\) from \eqref{eq:UV}.  Cloitre's
Theorem~8.4 and Lemma~7.1 imply that they are globally defined, unbounded and
nondecreasing, and
\begin{equation}
 \Delta U(m),\Delta V(m)\in\{0,1\}.
 \label{eq:binary-increments}
\end{equation}

For completeness, we recall the word coordinates.  Put
\begin{equation}
 a_0=3,
 \qquad
 a_{r+1}=4a_r-1,
 \qquad
 a_r=\frac{2\cdot4^{r+1}+1}{3},
 \label{eq:a-r}
\end{equation}
and
\begin{equation}
 u_r=2a_r-r-2,
 \qquad
 v_r=4a_r-r-2.
 \label{eq:u-v}
\end{equation}
The word \(\Interleave(X,Y,\varepsilon)\) is produced by a two-state
transducer: in state \(0\) it reads the next letter of \(X\), in state \(1\)
the next letter of \(Y\), emits that letter, and uses the emitted letter as
the new state.  The orbital words are
\begin{align}
 P_0&=001011,\nonumber\\
 N_r&=\Interleave\bigl(P_r[2:|P_r|],P_r[0:|P_r|-1],1\bigr),
 \label{eq:law-one}\\
 P_{r+1}&=\Interleave(00N_r1,0N_r,0).
 \label{eq:law-two}
\end{align}
Cloitre's exact-fit lemma (Lemma~6.6) proves that neither interleaving uses a
fallback.  Moreover,
\begin{equation}
 |P_r|=2a_r,
 \qquad
 |N_r|=4a_r-3,
 \label{eq:word-lengths}
\end{equation}
and the increments on the two alternating arches are
\begin{align}
 \Delta U(u_r+t)&=P_r[t],
 &\Delta V(u_r+t)&=1-P_r[t]
 &&(0\leq t<2a_r),
 \label{eq:positive-arch}\\
 \Delta V(v_r+t)&=N_r[t]
 &&&(0\leq t<4a_r-3),
 \label{eq:negative-v}\\
 \Delta U(v_r)&=1,
 &\Delta U(v_r+t)&=1-N_r[t-1]
 &&(1\leq t<4a_r-3).
 \label{eq:negative-u}
\end{align}
At the endpoints,
\begin{equation}
 U(u_r)=V(u_r)=a_r,
 \qquad
 U(v_r)=V(v_r)=2a_r,
 \qquad
 U(u_{r+1})=V(u_{r+1})=a_{r+1}.
 \label{eq:arch-endpoints}
\end{equation}

\subsection{Cores, gaps and canonical trees}

For a balanced word \(W\) with \(n\) zeros and \(n\) ones, let
\begin{equation}
 0=p_0<p_1<\cdots<p_n
 \label{eq:zero-cuts}
\end{equation}
be the cuts immediately after its zeros.  Its zero gaps and degrees are
\begin{equation}
 \omega_i(W)=p_i-p_{i-1},
 \qquad
 c_i(W)=\omega_i(W)-1
 \quad(1\leq i\leq n).
 \label{eq:gaps-degrees}
\end{equation}
We write
\[
 \Deg(W)=\ms{c_i(W):1\leq i\leq n}.
\]
The reversal-complement of \(W\) is
\(W^*=\overline{\rev(W)}\).  An anti-palindromic word satisfies \(W=W^*\).

For a Dyck word \(W\), define
\begin{equation}
 T(W)=\Interleave(0W1,W,0),
 \qquad
 \Cword(W)=T(W)[1:|T(W)|-1].
 \label{eq:T-C}
\end{equation}
Set
\begin{equation}
 \Acore_r=P_r[1:|P_r|-1],
 \qquad
 \Ccore_r=\Cword(\Acore_r)=N_r[0:|N_r|-1].
 \label{eq:two-cores}
\end{equation}
The first identity in Cloitre's Lemma~9.4, together with the definition of
\(\Cword\), gives the last equality.  Since \(\Cword\) commutes with
reversal-complementation (Cloitre's Lemma~10.4), both cores in
\eqref{eq:two-cores} are anti-palindromic.

For \(W=\Acore_r\), put \(q_0=d_0=0\),
\begin{equation}
 q_m=\sum_{i=1}^{m}c_i,
 \qquad d_m=m-q_m,
 \qquad D_r=\max_{0\leq m\leq n}d_m,
 \qquad \rho(i)=n+1-i.
 \label{eq:forest-coordinates}
\end{equation}

\begin{proposition}[Imported structural package]
\label{prop:imported-package}
Beyond the displayed arch construction, the following deeper structural
statements are imported from Cloitre's preprint.  Each row also records its
use below.

\begin{center}
\scriptsize
\begin{tabular}{@{}>{\raggedright\arraybackslash}p{.42\textwidth}
>{\raggedright\arraybackslash}p{.23\textwidth}
>{\raggedright\arraybackslash}p{.25\textwidth}@{}}
\toprule
Statement used here & Source in \cite{cloitre2026mantovanelli} & Use in this paper \\
\midrule
The orbit is globally well-defined; \(U,V\) are unbounded and nondecreasing,
and \(\Delta U,\Delta V\in\{0,1\}\).
& Lemma~7.1; Theorems~8.3--8.4 & Plateau definitions \\
\(\Acore_r\) is a Dyck word satisfying property S.
& Corollary~10.3 & Canonical forest \\
\(\Acore_r=\Acore_r^*\).
& Corollary~10.5 & Reflection of gap lists \\
The canonical forest of \(\Acore_r\) has root word
\((1,3,\ldots,2r+1)\).
& Corollary~12.3 & Tree decomposition \\
For \(\rho(i)=n+1-i\), \(c_i>0\) implies \(c_{\rho(i)}=0\).
& equation~(43) & At most one positive degree per pair \\
The canonical forest of \(\Acore_r\) is Toeplitz; its common label at span
\(h\) is denoted \(\alpha_h^{(r)}\).
& Theorem~14.4 & Partner degree \\
For every leaf \(c_i=0\), edge--leaf duality and Toeplitzness give
\(\omega_{\rho(i)}=\alpha^{(r)}_{d_i}\).
& Lemma~13.2; equation~(44) & Central reflected gap \\
The first index at which \(d_m\) attains \(D_r\) is \(m=n/2\).
& Theorem~26.2 & Ordered central pair \\
\bottomrule
\end{tabular}
\end{center}
In particular, in the notation above,
\begin{align}
 \text{root word of }\Acore_r&=(1,3,5,\ldots,2r+1),
 \label{eq:root-types}\\
 c_i>0&\Longrightarrow c_{\rho(i)}=0,
 \label{eq:one-sided-reflection}\\
 \min\{0\leq m\leq n:d_m=D_r\}&=\frac n2,
 \label{eq:first-summit-import}\\
 c_m=0&\Longrightarrow
 \omega_{\rho(m)}=\alpha^{(r)}_{d_m}.
 \label{eq:toeplitz-leaf}
\end{align}
\end{proposition}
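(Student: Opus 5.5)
The proof of Proposition~\ref{prop:imported-package} is a matter of careful citation and notational translation rather than new mathematics. I will go through the table row by row. For each row I will state the imported result in Cloitre's notation and then check that it becomes the displayed statement once his objects are identified with ours.

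The first step is to fix the dictionary. Our \(P_r,N_r\) are Cloitre's orbital words, through the interleavings \eqref{eq:law-one}--\eqref{eq:law-two} and his exact-fit Lemma~6.6. Our core \(\Acore_r=P_r[1:|P_r|-1]\) is his core, with the zero-based half-open slice convention of the table at the start of Section~\ref{sec:framework}. Our zero cuts, gaps and degrees \eqref{eq:zero-cuts}--\eqref{eq:gaps-degrees} are his plane-forest encoding, in which each zero is a node and \(c_i\) counts the ones before the next zero, i.e.\ the number of children. I will check explicitly that his gap index starts at \(1\), and adjust by a shift if it does not, so that reflection reads \(\rho(i)=n+1-i\) in both places.

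Once the dictionary is fixed, the first rows are direct quotations. These are well-definedness and binary increments (Lemma~7.1, Theorems~8.3--8.4), the Dyck and property-S statement (Corollary~10.3), and anti-palindromicity (Corollary~10.5, which via Lemma~10.4 also gives it for \(\Ccore_r\)). The root word \eqref{eq:root-types} is Corollary~12.3. For the one-sided reflection \eqref{eq:one-sided-reflection}, Cloitre's equation~(43) is phrased for his reflected index. Since \(\Acore_r=\Acore_r^*\), reversal-complementation sends the zero gaps in reverse order, so the gap list is reflected by \(\rho\). This is what lets (43) be read as \(c_i>0\Rightarrow c_{\rho(i)}=0\).

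For \eqref{eq:toeplitz-leaf}, I will combine three ingredients. Edge--leaf duality (Lemma~13.2) says that a leaf \(c_m=0\) at depth coordinate \(d_m=m-q_m\) is dual to a reflected gap. Anti-palindromicity identifies that gap with \(\omega_{\rho(m)}\). Toeplitzness (Theorem~14.4) says that the label depends only on the span \(h=d_m\), which gives \(\alpha^{(r)}_{d_m}\) as in equation~(44). Finally, \eqref{eq:first-summit-import} is Theorem~26.2, once Cloitre's height process is matched with \(d_m\); this uses \(q_m=\sum_{i\le m}c_i\).

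I expect the main obstacle to be exactly these index checks. There are possible off-by-one discrepancies between \(d_m\) and Cloitre's depth, between his \(\omega\) and our \(\omega_i=p_i-p_{i-1}\), and in whether his first summit is at \(n/2\) or \(n/2\pm1\). The central-pair lemma later depends on \(n/2\) exactly. To settle these checks I would first confirm the conventions against the smallest case, \(r=0\): there \(P_0=001011\), so \(\Acore_0=0101\), \(n=2\), and \(c_1=1,c_2=0\) for the leading-zero convention with cuts \(p=(0,1,3)\) (this needs rechecking). I would then also confirm them against \(r=1\) computed from \eqref{eq:law-one}--\eqref{eq:law-two}, before asserting the translated identities in general.
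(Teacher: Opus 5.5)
Your plan matches the paper's proof, which is likewise a row-by-row citation: equation~(43) is invoked because Corollaries~10.3 and~10.5 supply its property-S and anti-palindromicity hypotheses, Lemma~13.2 with Theorem~14.4 and equation~(44) gives \eqref{eq:toeplitz-leaf}, and Theorem~26.2 is rewritten in the coordinates \eqref{eq:forest-coordinates}. One slip in your calibration: for \(\Acore_0=0101\) the cuts \(p=(0,1,3)\) give \(\omega=(1,2)\) and hence \((c_1,c_2)=(0,1)\), not \((1,0)\), because \(c_i\) counts the ones immediately \emph{preceding} the \(i\)-th zero (children before parent, so \(c_1=0\) and \(c_n\) is the last root's degree, as in Example~\ref{ex:first-fold}); with this convention \(d=(d_0,d_1,d_2)=(0,1,1)\) first reaches \(D_0=1\) at \(m=1=n/2\), so the off-by-one you feared in \eqref{eq:first-summit-import} does not occur, whereas your reversed convention would have put the summit at \(m=2\).
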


\begin{proof}
The first four rows are the cited construction, symmetry, and root-word
results.  Equation~(43) is applicable because Corollaries~10.3 and 10.5 give
its property-S and anti-palindromicity hypotheses.  Lemma~13.2 identifies the
reflected gap of a leaf with the label of the corresponding active edge;
Theorem~14.4 and equation~(44) make that label \(\alpha^{(r)}_{d_m}\).
The final row is Theorem~26.2 written in the coordinates
\eqref{eq:forest-coordinates}.
\end{proof}

No frequency statement from Cloitre's paper is used.  In particular, the
argument below does not use a dyadic block-mass identity or a hitting-time
endpoint formula.

\begin{lemma}[Terminal Toeplitz label]
\label{lem:terminal-toeplitz}
For every orbital core \(\Acore_r\), the terminal Toeplitz label at maximal
depth is
\begin{equation}
 \alpha^{(r)}_{D_r}=2.
 \label{eq:terminal-toeplitz-label}
\end{equation}
\end{lemma}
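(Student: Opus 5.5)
The plan is to read off the terminal label from the leaf--gap duality \eqref{eq:toeplitz-leaf}, applied at the first summit of the depth walk. The summit index is supplied by \eqref{eq:first-summit-import}.

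First, put $m_0=n/2$. By \eqref{eq:first-summit-import}, $m_0$ is the first index with $d_{m_0}=D_r$. Since $d_m-d_{m-1}=1-c_m$ with $c_m\geq0$, the walk $d$ can increase only by one, and only at a step with $c_m=0$. A first attainment of a new maximum (note $D_r\geq d_0=0$, and $m_0>0$ for the nontrivial cores) must therefore be an up-step. Hence $c_{m_0}=0$ and $d_{m_0-1}=D_r-1$. Applying \eqref{eq:toeplitz-leaf} with $m=m_0$ gives
\[
 \alpha^{(r)}_{D_r}=\alpha^{(r)}_{d_{m_0}}=\omega_{\rho(m_0)}=\omega_{n/2+1}=c_{n/2+1}+1 .
\]
So the lemma is equivalent to the central statement $c_{n/2+1}=1$. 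Half of this is immediate: $d_{n/2+1}\leq D_r$ forces $c_{n/2+1}\geq1$. The pair $\{n/2,\,n/2+1\}$ is a reflection pair, and it is consistent with \eqref{eq:one-sided-reflection}, which gives no further information here.

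The remaining point, and the main obstacle, is the upper bound $c_{n/2+1}\leq1$, i.e.\ ruling out a deep drop right after the summit. I would first try to get it from anti-palindromicity $\Acore_r=\Acore_r^*$ by locating the central letters. The core has length $2n$. The relation $W[t]=1-W[2n-1-t]$ forces $W[n-1]\neq W[n]$. I would show that the cut $p_{n/2}$ sits exactly at position $n$, i.e.\ that the $(n/2)$-th zero is the letter $W[n-1]$. This is a counting statement: the first $n$ letters should contain exactly $n/2$ zeros, which should follow from the symmetric placement of the summit under reflection, since reversal-complementation sends zero gaps of $W$ to one-runs read backwards. Then $W[n]=1$. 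Granted a zero at $W[n+1]$, we get $p_{n/2+1}=n+2$ and $\omega_{n/2+1}=2$. To get $W[n+1]=0$ I would use that $W[n+2]$ mirrors $W[n-3]$, together with the fact that $W[n-2]=0$, which is forced by the up-step $c_{m_0}=0$.

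If this local symmetry argument does not close, the fallback is induction on $r$ through the interleaving laws \eqref{eq:law-one}--\eqref{eq:law-two}. The base case $\Acore_0=P_0[1:5]=0101$ has $n=2$ and gaps $(2,2)$, so $c_2=1$. For the inductive step, the transducer $\Interleave(00N_r1,0N_r,0)$ reads its two inputs in lockstep near the center, so the central window $\cdots0\,1\,0\cdots$ of $\Ccore_r=N_r[0:|N_r|-1]$ should be copied into the central window of $\Acore_{r+1}$ with one inserted zero. That would show the central gap pattern $(c_{n/2},c_{n/2+1})=(0,1)$ is preserved. I expect verifying that the transducer is in phase exactly at the center to be the delicate bookkeeping step.
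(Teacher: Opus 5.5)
There is a genuine gap. Your reduction is sound: first attainment of $D_r$ at $m_0=n/2$ forces $c_{m_0}=0$, and \eqref{eq:toeplitz-leaf} then gives $\alpha^{(r)}_{D_r}=\omega_{n/2+1}=c_{n/2+1}+1$, with $c_{n/2+1}\geq1$ for free. But this is exactly the implication the paper runs in the other direction. In Lemma~\ref{lem:paired-degrees}(d), the odd central pair $(0,1)$ is \emph{derived from} the present lemma. So you have traded the statement for an equivalent one, and everything depends on an independent proof of $c_{n/2+1}\leq1$. Neither of your routes supplies one.

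\textbf{The local-symmetry route.} It rests on a false premise. The summit zero is never the letter $W[n-1]$, because the central factor is always $W[n-1]W[n]=10$; see \eqref{eq:central-factor-ten}.
\begin{itemize}
\item $\Acore_0=0101$ has its summit zero at position $0$. Its gaps are $(1,2)$, not $(2,2)$.
\item $\Acore_1=00100010110010111011$ has $n=10$ and degree word $(0,0,1,0,0,1,2,0,1,3)$. Its first summit is at the fifth zero, with $p_5=6\neq10$, and its first half contains six zeros, not five.
\end{itemize}
The underlying confusion is that $\rho$ reflects zero \emph{indices}, while anti-palindromicity reflects letter positions and sends the $i$-th zero to the $(n+1-i)$-th \emph{one}. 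The summit is central for $\rho$, not in the word.

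More decisively, no argument using only the Dyck property, anti-palindromicity, the summit location and \eqref{eq:one-sided-reflection} can succeed. The word $00110011$ has all of these properties: its degrees are $(0,0,2,0)$ and its first summit is at $m=2=n/2$. Yet $c_3=2$. Some genuinely structural input from Cloitre's forest theory is required.

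\textbf{The fallback induction.} The route through \eqref{eq:law-two} is only a sketch, and its premise also fails. The transducer does not read its two copies of $N_r$ in lockstep: their read positions differ by the height of the path emitted so far. You would also have to locate the first summit of $\Acore_{r+1}$ relative to the transducer, which is essentially Cloitre's Theorem~26.2.

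\textbf{What the paper does.} The paper never looks at the central letters; it works on the Toeplitz profile. By Cloitre's suspension formula (Theorem~14.3, equation~(52)), the profile of $\Acore_r$ ends with the block $\ProfileBlock_L$, where $L=\alpha^{(r-1)}_{D_{r-1}}+1\geq2$ because labels are positive. The last entry of that block is $\sigma_L(1)=2$. With the base profile $(2)$ of $\Acore_0$, induction on $r$ gives the lemma directly.
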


\begin{proof}
The profile of \(\Acore_0\) is \((2)\).  Suppose the assertion holds for
\(\Acore_{r-1}\).  Cloitre's suspension formula
\cite[Theorem~14.3, equation~(52)]{cloitre2026mantovanelli} expresses the new
profile as an initial root block followed by profile blocks; to avoid a
collision with the orbital core \(\Acore_r\), denote his block \(A_L\) here
by \(\ProfileBlock_L\).  The last block is
\(\ProfileBlock_{\alpha^{(r-1)}_{D_{r-1}}+1}\).  Its last entry is
\(\sigma_{\alpha^{(r-1)}_{D_{r-1}}+1}(1)=2\): Toeplitz gap labels are
positive, so the subscript is at least two, and the defining formula gives
\(\sigma_L(1)=2\) for every \(L\geq2\).  This proves
\eqref{eq:terminal-toeplitz-label} by induction.
\end{proof}

For \(L\geq0\), let \(\Tcal_L\) be the canonical rooted tree of type \(L\):
\(\Tcal_0\) is one vertex, while the root of \(\Tcal_L\) has one child of
each type \(0,1,\ldots,L-1\), in the order prescribed by the canonical
permutation.  The order is irrelevant for degree multisets.  Thus
\begin{equation}
 \Deg(\Tcal_L)
 =\ms{L}\mplus\Deg(\Tcal_0)\mplus\cdots\mplus\Deg(\Tcal_{L-1}).
 \label{eq:tree-recursion}
\end{equation}
The first four cases make this recursion concrete:
\[
 \begin{array}{c@{\qquad}l}
 \toprule
 \text{tree}&\text{degree multiset}\\
 \midrule
 \Tcal_0&\ms{0}\\
 \Tcal_1&\ms{1,0}\\
 \Tcal_2&\ms{2,1,0,0}\\
 \Tcal_3&\ms{3,2,1,1,0,0,0,0}\\
 \bottomrule
 \end{array}
\]

For a multiset \(\mathcal M\) and a number \(a\), write
\begin{equation}
 a+\mathcal M:=\ms{a+x:x\in\mathcal M}.
 \label{eq:multiset-shift}
\end{equation}

\subsection{The exact gap transform}

The next local transform will be used both for even half-arches and for the
canonical-tree decomposition.

\begin{lemma}[Exact gap transform]
\label{lem:gap-transform}
Let \(W\) be a Dyck word of semilength \(n\), with zero gaps
\(\omega_1,\ldots,\omega_n\).  If
\(e_1,\ldots,e_{2n}\) is the degree word of \(\Cword(W)\), then, for
\(0\leq t<2n\),
\begin{equation}
 e_{t+1}=
 \begin{cases}
 0,&W[t]=0,\\
 \omega_{O_W(t+1)},&W[t]=1,
 \end{cases}
 \label{eq:degree-transform}
\end{equation}
where \(O_W(t+1)\) is the number of ones in \(W[0:t+1]\).  Consequently,
\begin{equation}
 \Deg(\Cword(W))=\ms{0^{[n]}}\mplus\bigl(1+\Deg(W)\bigr).
 \label{eq:multiset-gap-transform}
\end{equation}
\end{lemma}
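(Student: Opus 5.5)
The plan is to read off the structure of \(T(W)=\Interleave(0W1,W,0)\) directly from the two-state transducer, by splitting the output into alternating \emph{stints}. Put \(X=0W1\) and \(Y=W\). In state \(0\) the transducer copies letters of \(X\) and stays in state \(0\) as long as it emits zeros; the first \(1\) it emits switches it to state \(1\). In state \(1\) it copies letters of \(Y\) as long as it emits ones; the first \(0\) switches it back. Hence the output is a concatenation
\[
 T(W)=X\text{-stint}_1\,Y\text{-stint}_1\,X\text{-stint}_2\,Y\text{-stint}_2\cdots .
\]
Each \(X\)-stint is a maximal block of zeros of \(X\) followed by one \(1\) of \(X\). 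Each \(Y\)-stint is a block of ones of \(W\) followed by one \(0\) of \(W\).

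Consequently the \(Y\)-stints consume \(W\) exactly along its zero gaps. The \(j\)-th \(Y\)-stint is the \(j\)-th gap segment of \(W\): it consists of \(\omega_j-1\) ones followed by the \(j\)-th zero. Likewise, the \(j\)-th \(X\)-stint ends at the \(j\)-th \(1\) of \(X\). For \(j\le n\) this is the \(j\)-th \(1\) of \(W\), and the \((n+1)\)-st \(X\)-stint ends at the appended final \(1\).

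The first step is to check that no fallback or premature exhaustion occurs. The \(j\)-th \(Y\)-stint is entered right after the \(j\)-th one of \(W\) has been read in \(X\), so we need \(W\) to have at least \(j\) zeros at that point. This follows from the Dyck prefix condition \(\#0\ge\#1\). Because \(W\) is Dyck, it ends with a \(1\), so its \(n\)-th one is its last letter. After the \(n\)-th \(Y\)-stint, the final \(X\)-stint is therefore just the appended \(1\), and \(Y\) is then exhausted. This exact-fit bookkeeping, including the leading \(0\) of \(X\) and the trailing appended \(1\), is the step I expect to need the most care. Everything else is counting.

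Next we locate the zeros of \(\Cword(W)=T(W)[1:|T(W)|-1]\). The slice removes the leading letter \(X[0]=0\) and the final appended \(1\). What remains has \(2n\) zeros, in natural bijection with positions \(0\le t<2n\) of \(W\), and the bijection is order-preserving:
\begin{itemize}[leftmargin=1.5em]
\item if \(W[t]=0\), the zero is the copy of \(W[t]\) emitted inside an \(X\)-stint;
\item if \(W[t]=1\) is the \(j\)-th one of \(W\), so that \(j=O_W(t+1)\), the zero is the terminal zero of the \(j\)-th \(Y\)-stint.
\end{itemize}
The \(j\)-th \(Y\)-stint sits immediately after the \(1\) emitted for \(W[t]\) and before the next \(X\)-stint, which confirms the ordering. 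Since degrees are gap lengths minus one, \(e_{t+1}\) is the number of ones immediately preceding the \((t+1)\)-st zero.

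For an \(X\)-stint zero, the preceding letter is a zero: an earlier zero of the same stint, the terminal zero of the previous \(Y\)-stint, or nothing, at the very start. So \(e_{t+1}=0\).

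For the terminal zero of the \(j\)-th \(Y\)-stint, it is preceded by the \(1\) that closed \(X\)-stint \(j\) together with the \(\omega_j-1\) ones of the \(Y\)-stint. These give exactly \(\omega_j\) consecutive ones, so \(e_{t+1}=\omega_{O_W(t+1)}\). This proves \eqref{eq:degree-transform}.

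Finally, the multiset identity \eqref{eq:multiset-gap-transform} follows by collecting terms. The \(n\) zero letters of \(W\) contribute \(0^{[n]}\). The \(n\) one letters contribute each \(\omega_j=1+c_j\) exactly once as \(j\) runs over \(1,\ldots,n\).
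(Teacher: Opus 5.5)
Your core computation is correct and takes a different route from the paper, but the end-of-word bookkeeping contains a false step.

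\textbf{Comparison with the paper.} The paper never analyses the transducer. It imports from Cloitre's Lemma~9.2 that the gaps of \(\Cword(W)\), listed in the order of the letters of \(W\), are \(1\) for each zero and \(1+\omega_j\) for the \(j\)-th one, and then subtracts one. Your stint decomposition proves exactly this imported fact from the definition of \(\Interleave\). It therefore makes the lemma self-contained, at the cost of some bookkeeping. The bijection between zeros of \(\Cword(W)\) and positions of \(W\), its monotonicity, and the one-runs in front of each zero are all right. For ``exactly \(\omega_j\)'' you implicitly use that the letter before the \(1\) closing the \(j\)-th \(X\)-stint is a zero. That letter is either a zero of that stint or the terminal zero of the \((j-1)\)-st \(Y\)-stint; say so.

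\textbf{The false step.} The false step is ``after the \(n\)-th \(Y\)-stint \ldots\ \(Y\) is then exhausted''. What remains of \(Y=W\) after its \(n\)-th zero is the terminal one-run of \(W\). This run is nonempty precisely because a Dyck word ends in \(1\). After the last \(X\)-stint (the appended \(1\)) the machine is in state \(1\). It emits this run while staying in state \(1\), and only then are both inputs used up, without fallback. Thus \(|T(W)|=4n+2\), and the letter deleted by the slice \([1:|T(W)|-1]\) is the last letter of \(W\) read from \(Y\), not the appended \(1\). For \(W=0101\) one gets \(T(W)=0010011011\), where the appended \(1\) is \(T(W)[8]\) and \(T(W)[9]=Y[3]\).

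\textbf{Why it matters, and the repair.} The slip matters for consistency. With your ending, \(\Cword(W)\) would have \(2n\) zeros but fewer than \(2n\) ones. It would then not be the balanced word of semilength \(2n\) whose degree word \(e_1,\ldots,e_{2n}\) the lemma refers to. The repair is local: the omitted letters are all ones lying after the last zero. So the zero cuts, your formula for \(e_{t+1}\), and the multiset identity are unaffected.

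Relatedly, the Dyck prefix inequality is not what guarantees the \(j\)-th \(Y\)-stint. The two inputs are read independently and \(Y\) has \(n\) zeros, so its \(j\)-th zero exists for every \(j\leq n\).
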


\begin{proof}
Cloitre's Lemma~9.2 gives the gaps of \(\Cword(W)\), in the order indexed by
the letters of \(W\): a zero creates gap \(1\), while the \(j\)-th one
creates gap \(1+\omega_j\).  Subtracting one gives
\eqref{eq:degree-transform}.  There are \(n\) letters of each kind, which
gives \eqref{eq:multiset-gap-transform}.
\end{proof}

\begin{corollary}[Final degree under the gap transform]
\label{cor:final-degree-transform}
If the final degree of \(\Acore_r\) is \(2r+1\), then the final degree of
\(\Ccore_r=\Cword(\Acore_r)\) is \(2r+2\).
\end{corollary}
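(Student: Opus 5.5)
We apply the degree formula \eqref{eq:degree-transform} of Lemma~\ref{lem:gap-transform} to the last letter of \(\Acore_r\) and check that it reads off the final gap of \(\Acore_r\). The degree word \(e_1,\ldots,e_{2n}\) of \(\Ccore_r=\Cword(\Acore_r)\) has final entry \(e_{2n}\), which corresponds to \(t=2n-1\). Since \(\Acore_r\) is a Dyck word (Proposition~\ref{prop:imported-package}), its last letter satisfies \(\Acore_r[2n-1]=1\), so the second case of \eqref{eq:degree-transform} applies. The number of ones in \(\Acore_r[0:2n]\) is \(O_W(2n)=n\), because the whole word is balanced. Hence
\[
 e_{2n}=\omega_n(\Acore_r)=c_n(\Acore_r)+1 .
\]

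By the hypothesis, the final degree of \(\Acore_r\) is \(c_n(\Acore_r)=2r+1\). Therefore \(e_{2n}=2r+2\).

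It remains to confirm that \(e_{2n}\) is the final degree of \(\Ccore_r\) in the sense of \eqref{eq:gaps-degrees}, that is, the degree attached to the last zero cut. Lemma~\ref{lem:gap-transform} gives exactly \(2n\) degrees for \(\Cword(W)\), and by \eqref{eq:multiset-gap-transform} the word \(\Cword(W)\) has \(2n\) zeros. These degrees are listed in left-to-right order, as Cloitre's Lemma~9.2 presents the gaps in the order of the letters of \(W\). So the last one is \(c_{2n}(\Ccore_r)\). This ordering is the only point needing care: it amounts to checking that the indexing in \eqref{eq:degree-transform} is positional, which is how it was stated.

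No obstacle beyond this indexing check is expected. In particular, the trailing \(1\) of \(\Acore_r\) contributes the final gap of \(\Ccore_r\), which is the last gap \(\omega_n\) of \(\Acore_r\) lengthened by one.
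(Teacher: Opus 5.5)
Your proof is correct and is essentially the paper's own argument: a nonempty Dyck word ends in its \(n\)-th one, so \eqref{eq:degree-transform} at \(t=2n-1\) makes the last degree of \(\Cword(W)\) equal to \(\omega_n(W)=c_n(W)+1=2r+2\). Your extra indexing check is harmless but unnecessary, since Lemma~\ref{lem:gap-transform} already defines \(e_1,\ldots,e_{2n}\) as the ordered degree word.
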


\begin{proof}
A nonempty Dyck word ends in its \(n\)-th one.  Formula
\eqref{eq:degree-transform} therefore makes the last degree of
\(\Cword(W)\) equal to \(\omega_n(W)=c_n(W)+1\).
\end{proof}

\section{Frequencies as folded plateau lengths}
\label{sec:plateaux}

For \(X\in\{U,V\}\), define its first hitting time and its plateau length by
\begin{equation}
 \tau_X(s)=\min\{m\geq1:X(m)=s\},
 \qquad
 L_X(s)=\#\{m\geq1:X(m)=s\}.
 \label{eq:hitting-local}
\end{equation}
They are finite because \(X\) is unbounded and has binary increments.

\begin{lemma}[Plateau identity]
\label{lem:plateau-identity}
For every \(s\geq1\),
\begin{equation}
 F(s)=L_U(s)+L_V(s)
     =\tau_U(s+1)-\tau_U(s)+\tau_V(s+1)-\tau_V(s).
 \label{eq:plateau-identity}
\end{equation}
\end{lemma}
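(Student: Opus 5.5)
The plan is to partition the indices \(n\) with \(Q(n)=2s-1\) according to the parity of \(n\), and then read each part as a plateau length of one branch.

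\emph{Odd indices.} For \(n=2m-1\) odd, the definition \eqref{eq:UV} gives
\[
 Q(2m-1)=2s-1 \iff U(m)=s.
\]
\emph{Even indices.} For \(n=2m\) even,
\[
 Q(2m)=2s-1 \iff V(m)=s.
\]
\emph{Counting.} The map \(n\mapsto m\) is a bijection, both from odd \(n\geq1\) onto \(m\geq1\) and from even \(n\geq2\) onto \(m\geq1\). The odd indices therefore contribute exactly \(L_U(s)\) and the even indices exactly \(L_V(s)\). Every value is odd by Proposition~\ref{prop:fundamental-properties}, so no index is missed, and this gives the first equality \(F(s)=L_U(s)+L_V(s)\).

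\emph{Hitting times.} Fix \(X\in\{U,V\}\). By Proposition~\ref{prop:fundamental-properties}, \(X\) is nondecreasing, unbounded, has increments in \(\{0,1\}\), and starts at \(X(1)=1\) (since \(Q(1)=Q(2)=1\)). Hence every level \(s\geq1\) is attained. Because \(X\) is monotone, the set \(\{m:X(m)=s\}\) is an interval of consecutive integers. It begins at \(\tau_X(s)\). Because the steps are binary, \(X\) cannot skip level \(s+1\), so the interval ends at \(\tau_X(s+1)-1\). This gives
\[
 L_X(s)=\tau_X(s+1)-\tau_X(s).
\]
Summing over \(X=U,V\) yields the second equality.

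There is no real obstacle here. The only points needing care are the index shift between \(n\) and \(m\), in particular that \(m\geq1\) covers both \(n=1\) and \(n=2\), and the use of binary increments to rule out an empty level or a skipped level. Both points are supplied by Proposition~\ref{prop:fundamental-properties}.
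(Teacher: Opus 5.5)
Your proposal is correct and follows the paper's proof: you split the indices by parity via \eqref{eq:UV} to get $F(s)=L_U(s)+L_V(s)$, then use monotonicity and binary increments to identify each level set with the consecutive indices $\tau_X(s),\ldots,\tau_X(s+1)-1$. The appeal to oddness of $Q$ is harmless but not needed, since splitting $n$ by parity already accounts for every index whatever the values are.
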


\begin{proof}
By \eqref{eq:UV}, \(Q(2m-1)=2s-1\) exactly when \(U(m)=s\), and
\(Q(2m)=2s-1\) exactly when \(V(m)=s\).  This proves the first equality.
A nondecreasing integer sequence with increments in \(\{0,1\}\) occupies
level \(s\) at the consecutive indices from \(\tau_X(s)\) through
\(\tau_X(s+1)-1\).  This proves the second equality.
\end{proof}

For \(r\geq0\), let
\[
 g^{(r)}=(g_1,\ldots,g_{a_r-1})
       =(\omega_i(\Acore_r))_{i=1}^{a_r-1}
\]
and
\[
 h^{(r)}=(h_1,\ldots,h_{2a_r-2})
       =(\omega_i(\Ccore_r))_{i=1}^{2a_r-2}.
\]

\begin{lemma}[Interior plateau formula]
\label{lem:interior-plateau}
For every \(r\geq0\),
\begin{align}
 F(a_r+j)&=g_j+g_{a_r-j}
 &&(1\leq j\leq a_r-1),
 \label{eq:positive-internal}\\
 F(2a_r+j)&=h_j+h_{2a_r-1-j}
 &&(1\leq j\leq2a_r-2).
 \label{eq:negative-internal}
\end{align}
\end{lemma}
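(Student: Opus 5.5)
The plan is to read off both formulas directly from the arch descriptions \eqref{eq:positive-arch}--\eqref{eq:negative-u} via the plateau identity, Lemma~\ref{lem:plateau-identity}. The point is that a plateau of \(U\) or \(V\) has length equal to the distance between two consecutive increments. On each arch those increments sit at the ones, respectively the zeros, of a single word. Zero gaps are then produced directly, while one gaps are converted into zero gaps by anti-palindromicity of the cores.

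\emph{Positive arch.} First I will check that \(P_r=0\,\Acore_r\,1\).
\begin{itemize}[leftmargin=2.2em]
\item This holds for \(r=0\), since \(P_0=001011\) and \(\Acore_0=0101\).
\item In general \(\Acore_r=P_r[1:|P_r|-1]\) is Dyck, so \(P_r\) has \(a_r\) zeros and \(a_r\) ones.
\item The endpoint identities \eqref{eq:arch-endpoints} together with \(a_{r+1}=4a_r-1\) fix the boundary letters; equivalently one reads them off from \eqref{eq:law-two}.
\end{itemize}
Since \(U(u_r)=V(u_r)=a_r\) and the increments are binary, the rules are as follows. \(V\) rises exactly at the zeros of \(P_r\), and \(U\) rises exactly at the ones of \(P_r\). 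Monotonicity ensures that level \(a_r+j\), for \(1\le j\le a_r-1\), is occupied only inside the arch: it is entered after the \(j\)-th relevant increment and left at the \((j+1)\)-th.

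\emph{The \(V\)-plateau.} \(L_V(a_r+j)\) is the distance between the \(j\)-th and \((j+1)\)-th zeros of \(P_r\). The leading zero of \(P_r\) plays the role of the cut \(p_0=0\) of \(\Acore_r\) after the shift by one. This distance is therefore \(p_j-p_{j-1}=\omega_j(\Acore_r)=g_j\).

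\emph{The \(U\)-plateau.} \(L_U(a_r+j)\) is the distance between the \(j\)-th and \((j+1)\)-th ones of \(P_r\). These are ones of \(\Acore_r\), except that the final one is the appended letter. Because \(\Acore_r=\Acore_r^*\), position \(t\) carries a one exactly when position \(2n-1-t\) carries a zero, where \(n=a_r-1\). Hence this distance equals the distance between the \((n-j)\)-th and \((n+1-j)\)-th zeros, that is \(\omega_{n+1-j}=g_{a_r-j}\). The boundary case \(j=n\) uses the appended final one, and it matches \(\omega_1\) with the convention \(p_0=0\). The plateau identity then gives \eqref{eq:positive-internal}.

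\emph{Negative arch.} This runs the same way with \(\Ccore_r\) of semilength \(2a_r-2\), using \(N_r=\Ccore_r\,x\).
\begin{itemize}[leftmargin=2.2em]
\item The letter \(x\) must be \(1\). Indeed \(V\) climbs from \(2a_r\) to \(a_{r+1}=2a_r+(2a_r-1)\) across \(N_r\) by \eqref{eq:negative-v} and \eqref{eq:arch-endpoints}. So \(N_r\) has one more one than the balanced word \(\Ccore_r\).
\item For \(V\), the increments are the ones of \(N_r\). Anti-palindromicity of \(\Ccore_r\) turns one-gaps into reversed zero-gaps, giving \(L_V(2a_r+j)=h_{2a_r-1-j}\). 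The appended final one again handles the case \(j=2a_r-2\).
\item For \(U\), \eqref{eq:negative-u} gives a forced increment at \(t=0\), followed by increments at the zeros of \(N_r\) shifted by one. The forced increment plays the role of \(p_0\), so \(L_U(2a_r+j)=\omega_j(\Ccore_r)=h_j\).
\end{itemize}

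\emph{Main obstacle.} The only delicate part is index bookkeeping at the two ends of each arch. One must verify that the boundary letters of \(P_r\) and \(N_r\) are as claimed and that they align with the cut convention \(p_0=0\). One must also confirm that for the extreme values of \(j\) the plateau is entered and left strictly inside the arch, so that no contribution from a neighbouring arch enters. I would settle both points by counting letters using \eqref{eq:word-lengths} and the endpoint values \eqref{eq:arch-endpoints}, and then check \(r=0\) against the listed terms \eqref{eq:first-terms}.
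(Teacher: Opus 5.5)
Your proposal is correct and follows essentially the paper's argument. The zeros of $P_r=0\,\Acore_r\,1$ give the $V$-plateaus $g_j$, and the ones give the reversed list via anti-palindromicity; on the negative arch, the shifted complement in \eqref{eq:negative-u} (with the forced increment acting as $p_0$) gives $h_j$, and the ones of $N_r=\Ccore_r\,1$ give the reversed list. One small point, which the paper itself takes for granted: the endpoint counts only show that the two boundary letters of $P_r$ are one $0$ and one $1$, and their order comes from the first letter emitted in \eqref{eq:law-two}, which you also cite.
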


\begin{proof}
Consider first the positive arch.  A zero of \(P_r\) is an increment of
\(V\), and a one is an increment of \(U\).  For
\(1\leq j\leq a_r-1\), the plateau length of \(V\) at \(a_r+j\) is the
spacing between the \(j\)-th and \((j+1)\)-st zero of \(P_r\).  Removing the
first zero of \(P_r\) makes this spacing \(g_j\).  The corresponding plateau
length of \(U\) is the spacing between consecutive ones.  Since
\(\Acore_r\) is anti-palindromic, its one spacings are its zero spacings in
reverse order.  Hence that length is \(g_{a_r-j}\), proving
\eqref{eq:positive-internal}.

The same indexing works on the negative arch after the one-step shift in
\eqref{eq:negative-u}.  Deleting the final one of \(N_r\) gives the balanced
anti-palindromic core \(\Ccore_r\).  Here the shifted complement in
\eqref{eq:negative-u} makes its zero spacings the \(U\)-plateau lengths,
while the one spacings are the \(V\)-plateau lengths.  Anti-palindromicity
reverses the latter list, proving \eqref{eq:negative-internal}.
\end{proof}

\begin{lemma}[Boundary plateau formula]
\label{lem:boundary-plateau}
For every \(r\geq0\), the two boundary levels have the exact local times
\begin{equation}
 \begin{array}{c|ccc}
 \text{level}&L_U&L_V&F\\
 \hline
 a_r&2r+3&1&2r+4\\
 2a_r&1&2r+4&2r+5
 \end{array}
 \label{eq:boundary-table}
\end{equation}
and the terminal gaps are
\begin{equation}
 g_{a_r-1}=2r+2,
 \qquad
 h_{2a_r-2}=2r+3.
 \label{eq:terminal-gaps}
\end{equation}
Equivalently,
\begin{equation}
 F(a_r)=2+g_{a_r-1},
 \qquad
 F(2a_r)=2+h_{2a_r-2}.
 \label{eq:boundary-gap-formulas}
\end{equation}
\end{lemma}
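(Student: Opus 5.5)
The plan is to read all four local times off the arch words through \eqref{eq:positive-arch}--\eqref{eq:negative-u} and the endpoint identities \eqref{eq:arch-endpoints}. The terminal gaps \eqref{eq:terminal-gaps} come first, since they fix the run lengths at both ends of each word. The equivalent form \eqref{eq:boundary-gap-formulas} is then just the table together with \eqref{eq:terminal-gaps}.

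\emph{Terminal gaps.} By \eqref{eq:root-types}, the root word of \(\Acore_r\) is \((1,3,\ldots,2r+1)\). I read its last entry as the degree of the final zero of \(\Acore_r\), so \(c_{a_r-1}(\Acore_r)=2r+1\) and \(g_{a_r-1}=2r+2\). This identification of the last root with the final zero gap is the one point I would check carefully against Cloitre's forest convention. Corollary~\ref{cor:final-degree-transform} then gives a final degree \(2r+2\) for \(\Ccore_r\), hence \(h_{2a_r-2}=2r+3\). Since both cores are anti-palindromic, the final zero gap of each core also controls its run structure:
\begin{itemize}[leftmargin=1.5em]
\item \(\Acore_r\) ends in \(2r+1\) ones and begins with \(2r+1\) zeros;
\item \(\Ccore_r\) ends in \(2r+2\) ones and begins with \(2r+2\) zeros.
\end{itemize}

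\emph{Level \(a_r\).} At \(m=u_r\) we have \(U=V=a_r\). Since \(P_r=0\,\Acore_r\,1\), the word \(P_r\) begins with a block of zeros of length \(2r+2\). On the positive arch, \(V\) therefore rises immediately and \(U\) stays flat along this initial zero block. For the \(V\)-plateau I will show that \(V(u_r-1)=a_r-1\). This holds because the last letter of \(N_{r-1}\) is the final one deleted in \eqref{eq:two-cores}, and \(u_r=v_{r-1}+|N_{r-1}|\) by \eqref{eq:u-v} and \eqref{eq:word-lengths}. Together these give \(L_V(a_r)=1\).

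For \(U\), I will count how far the level \(a_r\) extends on each side of \(u_r\):
\begin{itemize}[leftmargin=1.5em]
\item forward, over the initial zero block of \(P_r\);
\item backward, over the tail of the negative arch, where \(\Delta U(v_{r-1}+t)=1-N_{r-1}[t-1]\) is governed by the last letters of \(\Ccore_{r-1}\) via the one-step shift in \eqref{eq:negative-u}.
\end{itemize}
The target is a total of \(2r+3\) indices.

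\emph{Level \(2a_r\).} The same procedure applies at \(m=v_r\), where \(U=V=2a_r\). By \eqref{eq:negative-u}, \(\Delta U(v_r)=1\), so \(U\) leaves level \(2a_r\) at once. The \(U\)-plateau there should be the single index \(v_r\), provided the last letter of \(P_r\), which is \(1\), makes \(U\) arrive exactly at \(v_r\). The \(V\)-plateau consists of the final ones of \(P_r\) (flat steps of \(V\)) followed by the leading zeros of \(N_r\), which begin with the \(2r+2\) leading zeros of \(\Ccore_r\). Counting these indices should give \(2r+4\).

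The main obstacle is the off-by-one bookkeeping at the junction of two arches. The one-step shift in \eqref{eq:negative-u}, the deleted final letters of \(P_r\) and \(N_r\), and the half-open slice conventions all interact there. A naive reading of the second-to-last letter of \(N_{r-1}\) even suggests the \(U\)-run at level \(a_r\) continues backward, so I would first pin down exactly which letter of \(N_{r-1}\) governs \(\Delta U(u_r-1)\).

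To fix the conventions I would verify \(r=0\) directly from \eqref{eq:first-terms}, where \(a_0=3\), \(u_0=4\), \(v_0=10\). The values \(U(1),\ldots\) are \(1,1,2,3,3,3,5,\ldots\) on odd indices, and \(V\) similarly on even indices. This check gives \(F(3)=4\) and \(F(6)=5\), as the table predicts. For \(r=0\) there is no preceding negative arch, so the left boundary at level \(a_0\) comes from the initial values, which the same direct check covers. Once the conventions are fixed at \(r=0\), I would run the general run-counting argument for \(r\ge1\), using \(\Ccore_{r-1}\) for the left boundary at level \(a_r\).
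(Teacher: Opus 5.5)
Your derivation of the terminal gaps, of $L_V(a_r)=1$ and of $L_U(2a_r)=1$ agrees with the paper. The run lengths you then read off the final gap are wrong, however, and all the remaining counting depends on them. The final degree $c_n$ counts the ones lying \emph{between} the $(n-1)$-st and $n$-th zeros. The ones after the last zero are counted by no $c_i$, so the terminal one run has length $n-\sum_i c_i$. That is the number of trees of the canonical forest, not the last root degree. The root word $(1,3,\ldots,2r+1)$ has $r+1$ entries, so $\Acore_r$ begins with $r+1$ zeros and ends with $r+1$ ones, and $P_r$ has extreme runs of length $r+2$, not $2r+2$. For instance, $\Acore_1=P_1[1:21]$ begins $001\ldots$ and ends $\ldots011$, although its final degree is $3$. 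For $\Ccore_r$ the tool you need is the degree transform \eqref{eq:degree-transform}. The first $r+1$ letters of $\Acore_r$ are zeros and give $r+1$ zero degrees. The next letter is the first one and gives degree $\omega_1(\Acore_r)=1$. Hence $\Ccore_r$ begins with exactly $r+1$ zeros and ends with $r+1$ ones, and $N_r$ ends with $r+2$ ones. Already $\Ccore_0=01001101$ begins with a single zero, although its final degree is $2$.

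With your lengths the counts cannot close. At $v_r$ they give $(2r+2)+1+(2r+2)=4r+5$ indices for the $V$-plateau instead of $2r+4$. Even at $r=0$ this predicts $L_V(6)=5$, while $V(8)=\cdots=V(11)=6<V(12)$ gives $4$. At $u_r$ they give $2r$ indices below $u_r$ (from the claimed terminal run of $\Ccore_{r-1}$) and $2r+2$ above it, so $4r+3$ in total. Reaching $2r+3$ would require the backward run to stop at $u_r$, which your remark about the ``naive reading'' seems to aim for. That reading is in fact correct. For $r\geq1$ we have $\Delta U(u_r-1)=1-N_{r-1}[|N_{r-1}|-2]=0$, since this letter lies in the terminal one run of $N_{r-1}$, of length $r+1\geq2$. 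The negative-arch formula reads only $\Ccore_{r-1}$, whose terminal one run has length $r$. So the plateau has $r$ indices below $u_r$, then $u_r$, then $r+2$ above it from the initial zero run of $P_r$; for $r=1$ it is $\{18,\ldots,22\}$ at level $11$. Similarly, the $V$-plateau at $2a_r$ has $(r+2)+1+(r+1)=2r+4$ indices. With these corrected run lengths your counting scheme goes through as planned. Your $r=0$ calibration exposes only the $\Ccore$ half of the error, since $2r+1=r+1$ when $r=0$. (Also, $U(7)=4$, not $5$.)
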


\begin{proof}
The canonical forest of \(\Acore_r\) has \(r+1\) roots, so the terminal one
run of \(\Acore_r\) has length \(r+1\).  Since \(\Acore_r\) is
anti-palindromic, its initial zero run has the same length.  Consequently,
\(P_r=0\Acore_r1\) has extreme runs of length \(r+2\).  To locate the first
run of \(\Ccore_r\), apply the degree
transform \eqref{eq:degree-transform}: the first \(r+1\) letters of
\(\Acore_r\) are zeros, so the first \(r+1\) degrees of \(\Ccore_r\) are
zero; the next letter is the first one and produces degree
\(\omega_1(\Acore_r)=1\).  Thus \(\Ccore_r\) has initial zero run exactly
\(r+1\).  Anti-palindromicity gives the same terminal one-run length.  Hence
\(N_r\) has an initial zero run of length \(r+1\) and a terminal one run of
length \(r+2\).

For \(r=0\), the displayed initial values of the normalized branches give
explicitly
\[
 (L_U(3),L_V(3))=(3,1),
 \qquad
 (L_U(6),L_V(6))=(1,4),
\]
which is \eqref{eq:boundary-table}.  Now let \(r\geq1\).  At \(u_r\), the
\(V\)-plateau at \(a_r\) consists of one index.  Equations
\eqref{eq:negative-v}--\eqref{eq:negative-u} use all
but the last letter of the terminal one run of \(N_{r-1}\); this gives the
\(r\) preceding zero increments of \(U\).  The initial zero run of \(P_r\)
gives \(r+2\) following zero increments.  Hence the \(U\)-plateau has
\(r\) indices before \(u_r\), the index \(u_r\) itself, and \(r+2\) further
indices, so its length is \(2r+3\).  Thus
\(F(a_r)=2r+4\).

At \(v_r\), the final one run of \(P_r\) gives \(r+2\) preceding zero
increments of \(V\), and the initial zero run of \(N_r\) gives \(r+1\)
following zero increments.  Thus the \(U\)-plateau at \(2a_r\) consists of
one index, while the \(V\)-plateau runs from \(v_r-(r+2)\) through
\(v_r+(r+1)\), and therefore has length \(2r+4\).  Hence
\(F(2a_r)=2r+5\).

It remains to identify the terminal gaps.  The last root in
\eqref{eq:root-types} has degree \(2r+1\), hence
\(g_{a_r-1}=2r+2\).  Corollary~\ref{cor:final-degree-transform} makes the
final degree of \(\Ccore_r\) equal to \(2r+2\), hence
\(h_{2a_r-2}=2r+3\).  This proves \eqref{eq:terminal-gaps} and
\eqref{eq:boundary-gap-formulas}.
\end{proof}

Put
\begin{equation}
 H(s)=F(s)-3.
 \label{eq:H}
\end{equation}
Lemmas~\ref{lem:interior-plateau} and \ref{lem:boundary-plateau} express
\(H\) along every half-arch in ordered gap coordinates.

The two types of half-arches can now be indexed uniformly.  For \(r\geq0\),
set
\begin{align}
 W_{2r+1}&=\Acore_r,
 &s_{2r+1}&=a_r,
 &n_{2r+1}&=a_r-1,
 \label{eq:odd-W}\\
 W_{2r+2}&=\Ccore_r,
 &s_{2r+2}&=2a_r,
 &n_{2r+2}&=2a_r-2.
 \label{eq:even-W}
\end{align}
Thus \(W_k\) has semilength \(n_k\).  If
\(c^{(k)}=(c_1,\ldots,c_{n_k})\) is its degree word, then
Lemmas~\ref{lem:interior-plateau} and \ref{lem:boundary-plateau} become
\begin{align}
 H(s_k)&=k,
 \label{eq:uniform-start}\\
 H(s_k+j)&=c_j+c_{n_k+1-j}-1
 &&(1\leq j\leq n_k).
 \label{eq:uniform-fold}
\end{align}
The elementary length identities needed for the dyadic cut are
\begin{equation}
 2^k=s_{k-1}+\frac{n_{k-1}}2,
 \qquad
 2^{k+1}=s_k+\frac{n_k}2
 \qquad(k\geq2).
 \label{eq:dyadic-midpoints}
\end{equation}
They follow at once from \eqref{eq:a-r}; importantly, the dyadic boundaries
are \emph{midpoints} of two consecutive half-arches.

\section{Reflection pairs and canonical tree degrees}
\label{sec:reflection}

\begin{lemma}[Tree and half-arch degree decompositions]
\label{lem:tree-decomposition}
For \(L\geq0\), the canonical tree satisfies
\begin{equation}
 |\Tcal_L|=2^L,
 \qquad
 \#\{v\in\Tcal_L:\deg(v)>0\}=
 \begin{cases}
 0,&L=0,\\
 2^{L-1},&L\geq1.
 \end{cases}
 \label{eq:tree-size-positive}
\end{equation}
Moreover,
\begin{align}
 \Deg(W_{2r+1})
 &=\Deg(\Tcal_1)\mplus\Deg(\Tcal_3)\mplus\cdots
   \mplus\Deg(\Tcal_{2r+1}),
 \label{eq:odd-tree-decomp}\\
 \Deg(W_{2r+2})
 &=\Deg(\Tcal_2)\mplus\Deg(\Tcal_4)\mplus\cdots
   \mplus\Deg(\Tcal_{2r+2}).
 \label{eq:even-tree-decomp}
\end{align}
\end{lemma}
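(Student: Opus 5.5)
The plan splits into three parts: the tree counts, the odd decomposition, and the even decomposition. The even case is then deduced from the odd case through the gap transform.

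\emph{Tree counts.} We argue by induction on \(L\) using \eqref{eq:tree-recursion}. The size satisfies \(|\Tcal_L|=1+\sum_{j<L}2^j=2^L\). For positive degrees, the root of \(\Tcal_L\) with \(L\geq1\) contributes one, and each subtree \(\Tcal_j\) with \(1\le j<L\) contributes \(2^{j-1}\), while \(\Tcal_0\) contributes nothing. This gives \(1+\sum_{j=1}^{L-1}2^{j-1}=2^{L-1}\). The case \(L=1\) is just the root, and the case \(L=0\) is trivial.

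\emph{Odd decomposition.} Here \(W_{2r+1}=\Acore_r\) is a Dyck word with property S, and its canonical forest has root word \((1,3,\ldots,2r+1)\) by \eqref{eq:root-types}. The forest degree word of a Dyck word is exactly the zero-gap degree word \(c_i=\omega_i-1\), read in preorder. The degree multiset of the word is therefore the disjoint union of the degree multisets of its component trees. What remains is to show that a root of type \(L\) in Cloitre's canonical forest spans a tree isomorphic, as an unordered rooted tree, to \(\Tcal_L\). That is, a vertex of type \(L\) has children of types \(0,\ldots,L-1\). I would take this from the definition of the canonical forest in \cite{cloitre2026mantovanelli} (the type recursion underlying Corollary~12.3). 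Since only degree multisets are compared, the canonical order of the children is irrelevant.

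\emph{Even decomposition.} I would use Lemma~\ref{lem:gap-transform}. From \eqref{eq:multiset-gap-transform} and the odd case,
\[
\Deg(W_{2r+2})=\ms{0^{[n]}}\mplus\biguplus_{j=0}^{r}\bigl(1+\Deg(\Tcal_{2j+1})\bigr),
\]
where \(n=a_r-1=\sum_{j=0}^r 2^{2j}\) is the total number of vertices in the odd forest, by the size count. So it suffices to prove the single-tree identity
\[
\Deg(\Tcal_{L+1})=\ms{0^{[2^L]}}\mplus\bigl(1+\Deg(\Tcal_L)\bigr).
\]
I would prove this by induction from \eqref{eq:tree-recursion}. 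For \(L=0\) it reads \(\ms{1,0}\). In the inductive step, \(\Deg(\Tcal_{L+1})=\ms{L+1}\mplus\biguplus_{j\le L}\Deg(\Tcal_j)\). Applying the hypothesis to each \(\Tcal_j\) with \(1\le j\le L\) gives \(\ms{L+1}\mplus\ms{0}\mplus\biguplus_{j<L}\bigl(\ms{0^{[2^j]}}\mplus(1+\Deg\Tcal_j)\bigr)\). This equals \(\ms{0^{[2^L]}}\mplus\bigl(1+(\ms{L}\mplus\biguplus_{j<L}\Deg\Tcal_j)\bigr)\), since \(1+\sum_{j<L}2^j=2^L\). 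Summing this identity over \(L=2j+1\) yields \eqref{eq:even-tree-decomp}.

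The main obstacle is the odd step: justifying that the components of Cloitre's canonical forest with root types \(2j+1\) are exactly the trees \(\Tcal_{2j+1}\), and that the preorder degrees coincide with the zero-gap degrees. I would first try to cite the canonical-forest definition directly. If that link is not explicit there, I would derive it from property S via the standard Dyck-word-to-plane-forest bijection, in which zero gaps measure child counts.
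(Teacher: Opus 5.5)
Your proposal is correct and follows the paper's proof essentially step for step: the same induction on \eqref{eq:tree-recursion} for the counts, the odd decomposition taken from the root word \eqref{eq:root-types} and Cloitre's canonical forest grammar (which the paper likewise only cites), and the even case from \eqref{eq:multiset-gap-transform} applied componentwise together with the suspension identity \(\ms{0^{[|\Tcal_L|]}}\mplus\bigl(1+\Deg(\Tcal_L)\bigr)=\Deg(\Tcal_{L+1})\), proved by the same induction. Two small slips do not affect the multiset argument, but the second does rule out your fallback: first, the semilength is \(n=a_r-1=\sum_{j=0}^{r}2^{2j+1}\), the vertex count of the odd forest, whereas \(\sum_{j=0}^{r}2^{2j}\) counts only the positive degrees; second, the zero-gap degree word is not a preorder listing, since it ends with the root degree \(2r+1\), and reading it as a postorder degree sequence gives, for \(r=1\), the word \((0,0,1,0,0,1,2,0,1,3)\) splitting into a leaf and a nine-vertex tree rather than \(\Tcal_1\) and \(\Tcal_3\), so that parse cannot recover the components and the citation is what carries the odd step.
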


\begin{proof}
The recursion \eqref{eq:tree-recursion} gives
\(|\Tcal_0|=1\) and
\(|\Tcal_L|=1+\sum_{j<L}|\Tcal_j|=2^L\).  It also gives one positive root
plus all positive vertices in the children.  Induction yields
\(1+\sum_{j=1}^{L-1}2^{j-1}=2^{L-1}\) for \(L\geq1\).

The root word \eqref{eq:root-types} and the canonical forest grammar give
\eqref{eq:odd-tree-decomp}.  To obtain the even decomposition, note that
\begin{equation}
 \ms{0^{[|\Tcal_L|]}}\mplus\bigl(1+\Deg(\Tcal_L)\bigr)
 =\Deg(\Tcal_{L+1}).
 \label{eq:tree-suspension-multiset}
\end{equation}
Indeed, partition the \(|\Tcal_L|=1+\sum_{j<L}|\Tcal_j|\) zeros into one
singleton and blocks of sizes \(|\Tcal_j|\), and use induction together with
\eqref{eq:tree-recursion}.  Applying
\eqref{eq:multiset-gap-transform} to every tree component in
\eqref{eq:odd-tree-decomp} proves \eqref{eq:even-tree-decomp}.
\end{proof}

The following is the key boundary lemma.  Its central assertion is included
explicitly because it is exactly what permits the cut at
\eqref{eq:dyadic-midpoints}.

\begin{lemma}[Paired degrees and the central pair]
\label{lem:paired-degrees}
Let \(W_k\) have semilength \(n_k\) and degree word
\((c_1,\ldots,c_{n_k})\).  The following four assertions hold.
\begin{enumerate}[label=\textup{(\alph*)},leftmargin=2.4em]
\item The half-word lengths are even, more precisely
\begin{equation}
 n_{2r+1}=a_r-1=\frac{2(4^{r+1}-1)}3,
 \qquad
 n_{2r+2}=2a_r-2=\frac{4(4^{r+1}-1)}3.
 \label{eq:halfword-lengths}
\end{equation}
\item Exactly half of the degrees are positive.  Explicitly,
\begin{align}
 \#\{i:c_i(W_{2r+1})>0\}
 &=\sum_{j=0}^{r}2^{2j}
 =\frac{4^{r+1}-1}{3}=\frac{n_{2r+1}}2,
 \label{eq:positive-count-odd}\\
 \#\{i:c_i(W_{2r+2})>0\}
 &=\sum_{j=0}^{r}2^{2j+1}
 =\frac{2(4^{r+1}-1)}{3}=\frac{n_{2r+2}}2.
 \label{eq:positive-count-even}
\end{align}
\item For every \(1\leq i\leq n_k/2\), exactly one of
\(c_i,c_{n_k+1-i}\) is positive.
\item The ordered central pair is
\begin{equation}
 (c_{n_k/2},c_{n_k/2+1})=
 \begin{cases}
 (0,1),&k\text{ odd},\\
 (1,0),&k\text{ even}.
 \end{cases}
 \label{eq:central-pair}
\end{equation}
\end{enumerate}
\end{lemma}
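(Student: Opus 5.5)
The plan is to dispose of (a) and (b) by direct computation, to derive (c) by counting from (b) and the one-sided reflection property, and to read off (d) from the first-summit and Toeplitz-leaf imports in Proposition~\ref{prop:imported-package}, transported to the even cores by the gap transform.

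For (a), substitute $a_r=(2\cdot4^{r+1}+1)/3$ from \eqref{eq:a-r} into $n_{2r+1}=a_r-1$ and $n_{2r+2}=2a_r-2$. Both results are even because $4^{r+1}-1\equiv0\pmod 3$. For (b), use Lemma~\ref{lem:tree-decomposition}: $\Tcal_L$ has $2^{L-1}$ positive vertices for $L\ge1$. Summing over the decompositions \eqref{eq:odd-tree-decomp} and \eqref{eq:even-tree-decomp} gives $\sum_{j=0}^r2^{2j}$ and $\sum_{j=0}^r2^{2j+1}$, and these are half of the lengths in (a).

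For (c) with $k=2r+1$, I would use the imported one-sided reflection \eqref{eq:one-sided-reflection}. It shows that each of the $n_k/2$ pairs $\{i,\rho(i)\}$ contains at most one positive degree. By (b) there are exactly $n_k/2$ positive degrees in total, so every pair contains exactly one. For $k=2r+2$, I would argue directly through Lemma~\ref{lem:gap-transform} rather than importing a reflection property for $\Ccore_r$. Write $n=a_r-1$ and $W=\Acore_r$, so that $n_k=2n$. The degree $e_{t+1}$ is zero when $W[t]=0$, and it equals a gap $\omega\ge1$ when $W[t]=1$. The reflected index $2n+1-(t+1)$ corresponds to the letter $W[2n-1-t]$, which is the complement of $W[t]$ because $W=W^*$. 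Hence exactly one member of each reflected pair is positive.

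For (d) in the odd case, let $m=n_k/2$. By \eqref{eq:first-summit-import} the first time $d$ reaches $D_r$ is $m$, so $d_m>d_{m-1}$. Since $d_m-d_{m-1}=1-c_m$, this forces $c_m=0$. Part (c) then makes $c_{m+1}=c_{\rho(m)}$ positive. The Toeplitz leaf identity \eqref{eq:toeplitz-leaf}, combined with Lemma~\ref{lem:terminal-toeplitz}, gives $\omega_{m+1}=\alpha^{(r)}_{D_r}=2$, so $c_{m+1}=1$ and the ordered pair is $(0,1)$.

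In the even case the central degrees of $\Ccore_r$ are $e_n$ and $e_{n+1}$, which are governed by the letters $W[n-1]$ and $W[n]$. These two letters are complementary by anti-palindromicity. The odd case shows that around the cut $p=p_{n/2}$ the word $W$ reads $0,0,1,0$ at positions $p-2,\dots,p+1$, since $\omega_{n/2}=1$ and $\omega_{n/2+1}=2$. Inside this window, only a centre placed between positions $p$ and $p+1$ is compatible with $W=W^*$. I therefore aim to show that $p_{n/2}=n-1$, so that $W[n-1]=1$ and $W[n]=0$. Then $e_{n+1}=0$. For $e_n$, the number of ones $O_W(n)$ is $q_{n/2}+1=n/2$, so $e_n=\omega_{n/2}=1$, which gives $(1,0)$.

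The main obstacle is to justify that the word centre actually falls in this window, that is, to prove $p_{n/2}=n-1$. My first attempt will combine the first-summit property with the anti-palindromic relation between the zero cuts and the reflected one cuts. If that fails, I will prove the statement by induction on $r$ using the suspension structure.
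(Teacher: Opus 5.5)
\textbf{Verdict: genuine gap in the even case of (d).} Your treatment of (a), (b), (c) and the odd case of (d) is correct and follows the paper's route; invoking (c) for $c_{m+1}>0$ is redundant but harmless. In the even case of (d), the step you flag as the ``main obstacle'' cannot be carried out, because it is false for every $r\geq1$.

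\textbf{Why $p_{n/2}=n-1$ fails.} Since $p_m=\sum_{i\le m}\omega_i=m+q_m=2m-d_m$, the first-summit property \eqref{eq:first-summit-import} gives $p_{n/2}=n-D_r$. Moreover $D_r\geq r+1\geq 2$, because $\Acore_r$ opens with a zero run of length $r+1$ (proof of Lemma~\ref{lem:boundary-plateau}).
\begin{itemize}
\item The window $0010$ you found around the summit cut therefore sits $D_r$ letters to the left of the centre of $W$. It never contains the centre once $r\geq1$.
\item Your identity $O_W(n)=q_{n/2}+1=n/2$ is equivalent to $D_r=1$, so it fails as well.
\item For $r=1$ you can check this directly: $\Acore_1=00100010110010111011$, $n=10$, $D_1=4$ and $p_5=6$. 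The centre lies between positions $9$ and $10$, and $O_W(10)=4\neq5$.
\end{itemize}
The first-summit and Toeplitz imports only describe the word near position $n-D_r$, not near its centre. The fallback ``induction via suspension'' is not specified enough to fill this.

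\textbf{The missing idea.} You need a global parity argument that locates the central factor, followed by a second use of one-sided reflection.
\begin{enumerate}
\item Let $p=n/2=1+4+\cdots+4^r$ be the number of positive degrees of $W=\Acore_r$; note that $p$ is odd.
\item Each positive $c_i$ marks a one-run immediately followed by a zero, and $W$ ends with one further one-run. So $W$ has $p+1$ one-runs. Since the runs alternate starting with a zero-run, it also has $p+1$ zero-runs.
\item Anti-palindromicity makes the run-length vector palindromic, so the centre falls exactly between runs $p+1$ and $p+2$. Because $p+1$ is even, run $p+1$ is a one-run, and hence $W[n-1]W[n]=10$. Equivalently, the mirror-symmetric Dyck path has an even number of peaks, so its centre is a valley rather than a peak.
\item By \eqref{eq:degree-transform}, $W[n]=0$ gives $e_{n+1}=0$.
\item For $e_n$, let $s=O_W(n)$, so $e_n=\omega_s$. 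The letter $W[n]$ is the $(n-s+1)$-st zero and is preceded by a one, so $c_{n-s+1}>0$. Since $\rho(n-s+1)=s$, \eqref{eq:one-sided-reflection} forces $c_s=0$.
\item Hence $e_n=\omega_s=1$, and the even central pair is $(1,0)$.
\end{enumerate}
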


\begin{proof}
\emph{Part (a).}
Substitution of \eqref{eq:a-r} into \eqref{eq:odd-W} and
\eqref{eq:even-W} gives \eqref{eq:halfword-lengths}.  Both numerators are
divisible by six, so both half-word lengths are even.

\emph{Part (b).}
By \eqref{eq:tree-size-positive}, \(\Tcal_L\) has \(2^{L-1}\) positive
degrees when \(L\geq1\).  Summing the odd tree types in
\eqref{eq:odd-tree-decomp} gives
\(\sum_{j=0}^{r}2^{2j}\); summing the even types in
\eqref{eq:even-tree-decomp} gives \(\sum_{j=0}^{r}2^{2j+1}\).
The geometric sums and \eqref{eq:halfword-lengths} give
\eqref{eq:positive-count-odd} and \eqref{eq:positive-count-even}.

\emph{Part (c).}
For \(W_{2r+1}=\Acore_r\), the one-sided reflection
\eqref{eq:one-sided-reflection} says that a reflected pair contains at most
one positive degree.  There are \(n_{2r+1}/2\) pairs and, by part~(b), the
same number of positive degrees; hence every pair contains exactly one.

Let \(W=\Acore_r\), of semilength \(n\), and let
\(e_1,\ldots,e_{2n}\) be the degree word of
\(\Cword(W)=W_{2r+2}\).  Anti-palindromicity gives
\(W[2n-1-t]=1-W[t]\).  Formula~\eqref{eq:degree-transform} shows that the
degree associated with a zero letter is zero and that associated with a one
letter is positive.  Thus exactly one of \(e_{t+1},e_{2n-t}\) is positive.
This proves part~(c) for the even type as well.

\emph{Part (d).}
First let \(W_{2r+1}=\Acore_r\), write \(n=n_{2r+1}\), and put
\begin{equation}
 \frac n2=\frac{4^{r+1}-1}{3}=1+4+\cdots+4^r.
 \label{eq:half-n-odd}
\end{equation}

Put \(m=n/2\).  By \eqref{eq:first-summit-import}, \(d_m\) is the first
occurrence of the maximum \(D_r\).  Since
\(d_m-d_{m-1}=1-c_m\leq1\), first attainment forces \(c_m=0\).
Equations~\eqref{eq:toeplitz-leaf} and
\eqref{eq:terminal-toeplitz-label} give
\[
 \omega_{m+1}=\omega_{\rho(m)}=2,
\]
so \(c_{m+1}=1\).  This proves the odd case of
\eqref{eq:central-pair}.

It remains to determine the orientation and value of the even central pair.
Let \(p\) be the
number of positive degrees of \(W\).  We have \(p=n/2\), and
\eqref{eq:half-n-odd} says that \(p\) is odd.  Each positive degree of \(W\)
creates a nonempty run of ones before a zero, and the terminal one run creates
one more.  Hence \(W\) has \(p+1\) one runs.  A nonempty Dyck word begins
with zero and ends with one, so its zero and one runs alternate and occur in
equal numbers.  Thus there are also \(p+1\) zero runs.  The run-length vector
is palindromic because \(W\) is anti-palindromic.  The midpoint therefore
lies between runs \(p+1\) and \(p+2\).  Since \(p\) is odd and the first run
is a zero run, the central factor is
\begin{equation}
 W[n-1]W[n]=10.
 \label{eq:central-factor-ten}
\end{equation}

Let the first half \(W[0:n]\) contain \(s\) ones and \(i=n-s\) zeros.  The
zero \(W[n]\) is the \((i+1)\)-st zero and is preceded by a one, so
\(c_{i+1}>0\).  Since \(\rho(i+1)=n-i=s\),
\eqref{eq:one-sided-reflection} gives \(c_s=0\).  Applying
\eqref{eq:degree-transform} at the two central letters in
\eqref{eq:central-factor-ten} yields
\[
 e_n=\omega_{O_W(n)}=\omega_s=c_s+1=1,
 \qquad
 e_{n+1}=0.
\]
This is the even case of \eqref{eq:central-pair} and completes the proof.
\end{proof}

For a word satisfying Lemma~\ref{lem:paired-degrees}, define its shifted
positive-degree multiset by
\begin{equation}
 \Sh(W)=\ms{c_i-1:c_i>0}.
 \label{eq:Sh}
\end{equation}
There is one element of \(\Sh(W)\) for each reflection pair.

\begin{lemma}[Shifted tree degrees]
\label{lem:shifted-tree}
For every \(L\geq1\),
\begin{equation}
 \ms{d-1:d\in\Deg(\Tcal_L),\ d>0}=\Deg(\Tcal_{L-1}).
 \label{eq:shifted-tree}
\end{equation}
Consequently,
\begin{align}
 \Sh(W_{2r+1})
 &=\Deg(\Tcal_0)\mplus\Deg(\Tcal_2)\mplus\cdots
   \mplus\Deg(\Tcal_{2r}),
 \label{eq:Sh-odd}\\
 \Sh(W_{2r+2})
 &=\Deg(\Tcal_1)\mplus\Deg(\Tcal_3)\mplus\cdots
   \mplus\Deg(\Tcal_{2r+1}).
 \label{eq:Sh-even}
\end{align}
\end{lemma}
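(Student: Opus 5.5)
The plan is to prove the identity \eqref{eq:shifted-tree} first, by induction on \(L\) using the recursion \eqref{eq:tree-recursion}. Then I will derive \eqref{eq:Sh-odd} and \eqref{eq:Sh-even} by applying it componentwise to the decompositions in Lemma~\ref{lem:tree-decomposition}.

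\emph{Base case.} For \(L=1\), \(\Deg(\Tcal_1)=\ms{1,0}\). Its only positive entry is \(1\), and shifting gives \(\ms{0}=\Deg(\Tcal_0)\).

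\emph{Inductive step.} Assume the identity for all \(1\leq j<L\). By \eqref{eq:tree-recursion}, the positive entries of \(\Deg(\Tcal_L)\) are:
\begin{itemize}[leftmargin=2em]
\item the root value \(L\);
\item the positive entries of \(\Deg(\Tcal_j)\) for \(1\leq j\leq L-1\), since \(\Tcal_0\) contributes only a zero.
\end{itemize}
Subtracting one from each, the root contributes \(L-1\), and by the inductive hypothesis each child block contributes \(\Deg(\Tcal_{j-1})\). The result is
\[
\ms{L-1}\mplus\Deg(\Tcal_0)\mplus\cdots\mplus\Deg(\Tcal_{L-2}),
\]
which is exactly \(\Deg(\Tcal_{L-1})\) by \eqref{eq:tree-recursion} applied at \(L-1\). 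This holds also for \(L=1\), where the union is empty and the formula reads \(\ms{0}\), so the induction is uniform.

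For the consequences, \(\Sh(W)\) is determined by the multiset \(\Deg(W)\), since it shifts the positive entries by one. The operation "keep the positive entries and subtract one" commutes with multiset union. So I apply it to each summand:
\begin{itemize}[leftmargin=2em]
\item in \eqref{eq:odd-tree-decomp}, each \(\Deg(\Tcal_{2j+1})\) becomes \(\Deg(\Tcal_{2j})\), which gives \eqref{eq:Sh-odd};
\item in \eqref{eq:even-tree-decomp}, each \(\Deg(\Tcal_{2j+2})\) becomes \(\Deg(\Tcal_{2j+1})\), which gives \eqref{eq:Sh-even}.
\end{itemize}
All tree types in both decompositions are at least \(1\), so \eqref{eq:shifted-tree} applies to every summand.

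As a consistency check, \(|\Sh(W_k)|\) should equal \(n_k/2\), the number of reflection pairs. Using \(|\Tcal_L|=2^L\) from \eqref{eq:tree-size-positive}, the right-hand sides have sizes \(\sum_j 4^j\) and \(\sum_j 2\cdot4^j\). These match \eqref{eq:positive-count-odd}--\eqref{eq:positive-count-even}.

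I see no real obstacle here. The only point needing care is that the root of \(\Tcal_L\) is counted once as the entry \(L\) and is kept separate from the children's blocks in the bookkeeping.
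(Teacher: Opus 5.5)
Your proof is correct and follows essentially the paper's route. The paper reads \eqref{eq:shifted-tree} off directly from the suspension identity \eqref{eq:tree-suspension-multiset} (with $L-1$ in place of $L$), which was itself proved by the same induction on \eqref{eq:tree-recursion} that you carry out inline, and then applies it componentwise to \eqref{eq:odd-tree-decomp}--\eqref{eq:even-tree-decomp} exactly as you do.
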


\begin{proof}
Equation~\eqref{eq:tree-suspension-multiset}, with \(L-1\) in place of
\(L\), says that the positive degrees of \(\Tcal_L\) are exactly
\(1+\Deg(\Tcal_{L-1})\).  Subtracting one proves
\eqref{eq:shifted-tree}.  Applying this identity componentwise to
\eqref{eq:odd-tree-decomp} and \eqref{eq:even-tree-decomp} gives
\eqref{eq:Sh-odd} and \eqref{eq:Sh-even}.
\end{proof}

\section{The dyadic frequency theorem}
\label{sec:frequency-theorem}

We can now perform the exact fold.  Let
\begin{equation}
 \Bcal_k=\{s:2^k\leq s<2^{k+1}\}.
 \label{eq:block}
\end{equation}

\begin{proposition}[Folded half-arch correspondence]
\label{prop:folded-correspondence}
For every \(k\geq1\),
\begin{equation}
 \ms{H(s):s\in\Bcal_k}=\Deg(\Tcal_k).
 \label{eq:H-tree}
\end{equation}
\end{proposition}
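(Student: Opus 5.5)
The plan is to split the block \(\Bcal_k\) at the dyadic midpoints \eqref{eq:dyadic-midpoints} into the second half of half-arch \(k-1\) and the first half of half-arch \(k\). We then evaluate \(H\) on each piece through the folded formulas \eqref{eq:uniform-start}--\eqref{eq:uniform-fold} and recognize the resulting multiset via Lemma~\ref{lem:shifted-tree} and the tree recursion \eqref{eq:tree-recursion}.

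First I check that consecutive half-arches tile the levels without gaps. From \eqref{eq:odd-W}--\eqref{eq:even-W} we have \(s_{2r+1}+n_{2r+1}+1=2a_r=s_{2r+2}\) and \(s_{2r+2}+n_{2r+2}+1=4a_r-1=a_{r+1}=s_{2r+3}\). Hence half-arch \(k\) occupies exactly the levels \(s_k,\dots,s_k+n_k\), and the next half-arch starts at \(s_{k+1}=s_k+n_k+1\). Combining this with \eqref{eq:dyadic-midpoints}, for \(k\geq2\),
\[
 \Bcal_k=\{s_{k-1}+j:\tfrac{n_{k-1}}2\leq j\leq n_{k-1}\}\ \cup\ \{s_k+j:0\leq j<\tfrac{n_k}2\}.
\]
The case \(k=1\) is checked directly from \eqref{eq:first-terms}. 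The value \(3\) occurs at \(n=4,5,6\) and the value \(5\) at \(n=7,8,9,11\), so \(F(2)=3\) and \(F(3)=4\). Thus \(\ms{H(2),H(3)}=\ms{0,1}=\Deg(\Tcal_1)\).

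Next, for \(k\geq2\), I evaluate the two pieces, writing \(n=n_{k-1}\) and \(n'=n_k\).

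On the first piece, \eqref{eq:uniform-fold} gives \(H(s_{k-1}+j)=c_j+c_{n+1-j}-1\). As \(j\) runs over \(n/2+1,\dots,n\), every reflection pair \(\{i,n+1-i\}\) appears exactly once. By Lemma~\ref{lem:paired-degrees}(c) each such pair contributes (positive member)\(-1\), so these values form \(\Sh(W_{k-1})\). The one extra index \(j=n/2\) re-evaluates the central pair. By Lemma~\ref{lem:paired-degrees}(d), this contributes \(0+1-1=0\).

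On the second piece, \(j=0\) gives \(H(s_k)=k\) by \eqref{eq:uniform-start}. The indices \(1\leq j\leq n'/2-1\) cover every reflection pair of \(W_k\) except the central one. The central pair's element of \(\Sh(W_k)\) is \(1-1=0\), again by Lemma~\ref{lem:paired-degrees}(d). Hence this piece contributes \(\ms{k}\mplus\bigl(\Sh(W_k)\setminus\ms{0}\bigr)\).

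Adding the two pieces, the spare \(0\) from the first piece exactly replaces the missing central \(0\) of the second. This yields
\[
 \ms{H(s):s\in\Bcal_k}=\ms{k}\mplus\Sh(W_{k-1})\mplus\Sh(W_k).
\]
This exact compensation is precisely where the central-pair lemma, including the value \(1\) of its positive member, is indispensable.

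Finally, I apply Lemma~\ref{lem:shifted-tree}. The two shifted multisets \eqref{eq:Sh-odd} and \eqref{eq:Sh-even} contain \(\Deg(\Tcal_j)\) for the indices \(j\) of one parity up to \(k-2\) and of the other parity up to \(k-1\), respectively. Since \(k-1\) and \(k\) have opposite parities, their union is \(\Deg(\Tcal_0)\mplus\cdots\mplus\Deg(\Tcal_{k-1})\). Together with the extra \(\ms{k}\), the tree recursion \eqref{eq:tree-recursion} gives \(\Deg(\Tcal_k)\).

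I expect the main obstacle to be the boundary bookkeeping at the two cuts. One must confirm that the half-open ranges in \eqref{eq:dyadic-midpoints} place the central index \(n/2\) in the earlier half-arch and exclude it from the later one. One must also confirm that the terminal level \(s_{k-1}+n_{k-1}\) pairs \(c_{n}\) with \(c_1\) as an ordinary reflection pair. I would check these index conventions explicitly, including a sanity check at \(k=2\) against \(\Deg(\Tcal_2)=\ms{2,1,0,0}\).
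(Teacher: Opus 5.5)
Your proposal is correct and follows essentially the same route as the paper. It uses the same midpoint split of \(\Bcal_k\) into the tail of \(W_{k-1}\), the start \(s_k\), and the head of \(W_k\), and the same compensation in which the doubled central pair of \(W_{k-1}\) replaces the omitted central pair of \(W_k\) via Lemma~\ref{lem:paired-degrees}(c)--(d). It then makes the same final identification through Lemma~\ref{lem:shifted-tree} and the tree recursion, with your explicit adjacency check \(s_{k+1}=s_k+n_k+1\) and the base case \(k=1\) only slightly more detailed than the paper's.
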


\begin{proof}
The case \(k=1\) is direct:
\((F(2),F(3))=(3,4)\), so the excess multiset is
\(\ms{0,1}=\Deg(\Tcal_1)\).

Let \(k\geq2\).  By \eqref{eq:dyadic-midpoints}, the block begins halfway
through \(W_{k-1}\), passes through the new half-arch start \(s_k\), and ends
just before the midpoint of \(W_k\).  The definitions
\eqref{eq:odd-W}--\eqref{eq:even-W} also give the adjacency identity
\begin{equation}
 s_k=s_{k-1}+n_{k-1}+1,
 \label{eq:half-arch-adjacency}
\end{equation}
so no level is omitted between the two halves.  More precisely, the block
consists of
\begin{equation}
 \left\{s_{k-1}+j:\frac{n_{k-1}}2\leq j\leq n_{k-1}\right\},
 \quad \{s_k\},
 \quad
 \left\{s_k+j:1\leq j<\frac{n_k}2\right\}.
 \label{eq:block-split}
\end{equation}
The interval geometry is therefore
\[
 \underbrace{s_{k-1},\ldots,2^k-1}_{\text{discarded}}
 \ \big|\ 
 \underbrace{2^k,\ldots,s_k-1}_{\text{tail of }W_{k-1}}
 \ \big|\ \boxed{s_k}\ \big|\ 
 \underbrace{s_k+1,\ldots,2^{k+1}-1}_{\text{head of }W_k}
 \ \big|\ \cdots .
\]
Its cardinality check is
\begin{equation}
 \left(\frac{n_{k-1}}2+1\right)+1+
 \left(\frac{n_k}2-1\right)=2^k.
 \label{eq:block-cardinality}
\end{equation}

In the first set of \eqref{eq:block-split}, formula
\eqref{eq:uniform-fold} takes one value \(d-1\) from every reflected degree
pair.  More explicitly, the two indices
\(j=n_{k-1}/2\) and \(j=n_{k-1}/2+1\) generate the same central reflected
pair.  By \eqref{eq:central-pair}, its degrees sum to one, so both folded
values are \(1-1=0\).  Thus the central-pair value is taken twice, and the
multiset of the first interval is
\begin{equation}
 \Sh(W_{k-1})\mplus\ms{0}.
 \label{eq:old-tail}
\end{equation}
The last set of \eqref{eq:block-split} takes one value from every pair of
\(W_k\) except the central pair.  Its multiset is therefore \(\Sh(W_k)\)
with the central shifted contribution \(1-1=0\) removed.  Finally,
\eqref{eq:uniform-start} contributes \(k\).  The extra central zero from the
old tail and the omitted central zero from the new head cancel exactly, and
\begin{equation}
 \ms{H(s):s\in\Bcal_k}
 =\Sh(W_{k-1})\mplus\ms{k}\mplus\Sh(W_k).
 \label{eq:block-Sh}
\end{equation}

Equations~\eqref{eq:Sh-odd} and \eqref{eq:Sh-even} show that the two
\(\Sh\)-terms in \eqref{eq:block-Sh} contain, once each, the degree
multisets of all tree types \(0,1,\ldots,k-1\).  Adding the singleton root
degree \(k\) gives \(\Deg(\Tcal_k)\) by
\eqref{eq:tree-recursion}.  This proves \eqref{eq:H-tree}.
\end{proof}

\begin{lemma}[Degree distribution of \(\Tcal_k\)]
\label{lem:tree-degree-law}
For \(k\geq0\), degree \(h\) occurs in \(\Tcal_k\) with multiplicity
\begin{equation}
 \#\{v\in\Tcal_k:\deg(v)=h\}=
 \begin{cases}
 2^{k-h-1},&0\leq h<k,\\
 1,&h=k.
 \end{cases}
 \label{eq:degree-counts}
\end{equation}
Equivalently,
\begin{equation}
 \Deg(\Tcal_k)=\ms{\ordtwo(j):1\leq j\leq2^k}.
 \label{eq:tree-ruler}
\end{equation}
\end{lemma}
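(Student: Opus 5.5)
The plan is to prove \eqref{eq:degree-counts} by induction on \(k\) from the tree recursion \eqref{eq:tree-recursion}, and then to obtain \eqref{eq:tree-ruler} by counting \(2\)-adic valuations directly.

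For the base case \(k=0\), \(\Tcal_0\) is a single vertex of degree \(0\), which gives multiplicity \(1\) at \(h=k=0\). For the inductive step, fix \(k\geq1\) and \(h\) with \(0\leq h\leq k\). By \eqref{eq:tree-recursion}, the multiplicity of \(h\) in \(\Deg(\Tcal_k)\) equals \([h=k]\) plus the sum over \(0\leq j\leq k-1\) of the multiplicity of \(h\) in \(\Deg(\Tcal_j)\). By the inductive hypothesis, the tree \(\Tcal_j\) contributes \(0\) if \(j<h\), contributes \(1\) if \(j=h\), and contributes \(2^{j-h-1}\) if \(j>h\). For \(h=k\), only the root term survives, giving \(1\). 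For \(h<k\), the total is
\[
 1+\sum_{j=h+1}^{k-1}2^{j-h-1}=1+(2^{k-h-1}-1)=2^{k-h-1},
\]
as required. An equivalent route is to iterate \eqref{eq:tree-suspension-multiset}, which doubles the multiplicities of all positive degrees, puts all \(2^{L}\) new zeros at degree \(0\), and shifts degrees up by one. The direct sum above is cleaner, however.

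For \eqref{eq:tree-ruler}, count the \(j\in\{1,\ldots,2^k\}\) with \(\ordtwo(j)=h\). For \(h<k\), these are the odd multiples of \(2^h\) up to \(2^k\), namely \(j=2^h(2i+1)\) with \(0\leq i<2^{k-h-1}\); there are \(2^{k-h-1}\) of them. For \(h=k\), only \(j=2^k\) qualifies. No valuation exceeds \(k\). These counts match \eqref{eq:degree-counts} term by term, which proves the multiset equality.

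No step is a real obstacle. The one point needing care is the boundary bookkeeping: checking that the term \(j=h\) in the sum contributes exactly one, through the root of \(\Tcal_h\), and that the case \(h=k\) is handled separately so that the empty geometric sum is not misread.
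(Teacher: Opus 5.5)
Your proposal is correct and follows essentially the same route as the paper: induction on \(k\) through the recursion \eqref{eq:tree-recursion}, with the root giving the single degree \(k\) and the sum \(1+\sum_{j=h+1}^{k-1}2^{j-h-1}=2^{k-h-1}\) for \(h<k\), and then a direct count of \(2\)-adic valuations in \(\{1,\ldots,2^k\}\). Your explicit remark that \(\Tcal_j\) has no vertex of degree \(h>j\) is a small piece of bookkeeping that the paper's proof uses without stating.
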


\begin{proof}
The assertion is immediate for \(k=0\).  In
\eqref{eq:tree-recursion}, the root contributes one degree \(k\).  For
\(h<k\), the number of degree-\(h\) vertices is the sum of that number over
\(\Tcal_0,\ldots,\Tcal_{k-1}\).  Induction gives
\[
 1+\sum_{j=h+1}^{k-1}2^{j-h-1}=2^{k-h-1}.
\]
Among \(1,\ldots,2^k\), exactly \(2^{k-h-1}\) integers have 2-adic
valuation \(h<k\), and only \(2^k\) has valuation \(k\).  This proves
\eqref{eq:tree-ruler}.
\end{proof}

\begin{proof}[Proof of Theorem~\ref{thm:intro-frequency}]
For \(k=0\), the value \(1\) occurs three times, so both sides are
\(\ms{3}\).  For \(k\geq1\), combine
Proposition~\ref{prop:folded-correspondence},
Lemma~\ref{lem:tree-degree-law}, and \(F=H+3\).
\end{proof}

\begin{example}[The first nontrivial fold]
\label{ex:first-fold}
For \(k=2\), the relevant half-arch cores have degree words
\[
 c^{(1)}=(0,1),
 \qquad
 c^{(2)}=(0,1,0,2).
\]
The block \(\Bcal_2=\{4,5,6,7\}\) contains the reflected tail of \(W_1\),
the new half-arch start, and the first reflected pair of \(W_2\).  Formula
\eqref{eq:uniform-fold} therefore gives
\[
 (H(4),H(5),H(6),H(7))=(0,0,2,1),
\]
whose multiset is \(\Deg(\Tcal_2)=\ms{2,1,0,0}\), although the displayed
order is not canonical.  Adding three gives
\((F(4),F(5),F(6),F(7))=(3,3,5,4)\).
\end{example}

\begin{example}[A complete second fold]
\label{ex:second-fold}
For \(k=3\), the distinction between an ordered frequency sequence and the
multiset assertion is visible in full:
\[
\begin{array}{@{}l@{\quad}l@{}}
\toprule
\text{object}&\text{data}\\
\midrule
\text{actual frequency order on }8\leq s<16
 &(3,3,4,6,5,3,3,4)\\
\text{sorted frequency multiset}
 &\ms{3,3,3,3,4,4,5,6}\\
\text{actual excess order}
 &(0,0,1,3,2,0,0,1)\\
\text{canonical tree degree multiset }\Deg(\Tcal_3)
 &\ms{3,2,1,1,0,0,0,0}\\
\bottomrule
\end{array}
\]
Thus the theorem identifies the two multisets after adding three; it does not
identify their displayed orders.
\end{example}

\begin{corollary}[Exact multiplicities and block mass]
\label{cor:block-consequences}
For \(k\geq0\),
\begin{equation}
 \#\{s\in\Bcal_k:F(s)=3+h\}=
 \begin{cases}
 2^{k-h-1},&0\leq h<k,\\
 1,&h=k,
 \end{cases}
 \label{eq:frequency-counts}
\end{equation}
and
\begin{equation}
 \sum_{s=2^k}^{2^{k+1}-1}F(s)=4\cdot2^k-1.
 \label{eq:block-sum}
\end{equation}
If \(\mathcal N(M)=\sum_{s=1}^M F(s)\), then
\begin{equation}
 \mathcal N(2^K-1)=4(2^K-1)-K
 \qquad(K\geq0).
 \label{eq:cumulative-dyadic}
\end{equation}
\end{corollary}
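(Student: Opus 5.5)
The multiplicity statement \eqref{eq:frequency-counts} will be read off directly from Theorem~\ref{thm:intro-frequency} combined with Lemma~\ref{lem:tree-degree-law}. For \(k\geq1\), Proposition~\ref{prop:folded-correspondence} gives \(\ms{F(s)-3:s\in\Bcal_k}=\Deg(\Tcal_k)\). Lemma~\ref{lem:tree-degree-law} then says that \(h\) occurs \(2^{k-h-1}\) times for \(h<k\) and once for \(h=k\). For \(k=0\) the block is \(\{1\}\) with \(F(1)=3\), which matches \(h=0=k\) occurring once. No further structural input is needed.

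For the block mass \eqref{eq:block-sum}, I will sum the multiset: \(\sum_{s\in\Bcal_k}F(s)=3\cdot2^k+\sum_{d\in\Deg(\Tcal_k)}d\). The second term is the total degree of \(\Tcal_k\), which is its number of edges. By \eqref{eq:tree-size-positive} this equals \(|\Tcal_k|-1=2^k-1\). As a cross-check, the ruler form \eqref{eq:tree-ruler} gives \(\sum_{j\le 2^k}\nu_2(j)=\nu_2((2^k)!)=2^k-1\) by Legendre's formula. Adding gives \(4\cdot2^k-1\). The case \(k=0\) gives \(3\), which agrees.

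For \eqref{eq:cumulative-dyadic}, I will partition \(\{1,\ldots,2^K-1\}\) into the blocks \(\Bcal_0,\ldots,\Bcal_{K-1}\) and sum \eqref{eq:block-sum}. This gives \(\sum_{k<K}(4\cdot2^k-1)=4(2^K-1)-K\), and the case \(K=0\) is the empty sum \(0\).

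There is no real obstacle here; the only points to watch are the \(k=0\) edge case, which Proposition~\ref{prop:folded-correspondence} does not cover, and the edge-count identity for the degree sum.
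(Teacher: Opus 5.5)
Your proposal is correct and follows the paper's own argument: shift the degree counts of \(\Tcal_k\) by three, use that a tree on \(2^k\) vertices has degree sum \(2^k-1\), and sum the complete blocks \(\Bcal_0,\ldots,\Bcal_{K-1}\). You also treat the \(k=0\) block explicitly via \(F(1)=3\). The paper's proof passes over this case, since it cites Proposition~\ref{prop:folded-correspondence}, which only covers \(k\geq1\).
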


\begin{proof}
Equation~\eqref{eq:frequency-counts} is
\eqref{eq:degree-counts} shifted by three.  A tree with \(2^k\) vertices has
\(2^k-1\) edges, so its degree sum is \(2^k-1\).  Proposition
\ref{prop:folded-correspondence} therefore gives
\[
 \sum_{s\in\Bcal_k}F(s)=3\cdot2^k+(2^k-1),
\]
which is \eqref{eq:block-sum}.  Summing the complete blocks
\(\Bcal_0,\ldots,\Bcal_{K-1}\) proves
\eqref{eq:cumulative-dyadic}.
\end{proof}

The plateau identity also gives a clean hitting-time consequence without a
separate endpoint calculation.

\begin{corollary}[Combined dyadic hitting time]
\label{cor:hitting-time}
Let \(\Theta(s)=\tau_U(s)+\tau_V(s)\).  Then
\begin{equation}
 \Theta(s+1)-\Theta(s)=F(s),
 \qquad
 \Theta(2^K)=2^{K+2}-K-2.
 \label{eq:hitting-dyadic}
\end{equation}
\end{corollary}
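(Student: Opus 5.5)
The plan is to telescope the plateau identity and then sum the block masses already computed in Corollary~\ref{cor:block-consequences}. No new structural input should be needed.

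\emph{The difference formula.} By the second equality in \eqref{eq:plateau-identity},
\[
 F(s)=\bigl(\tau_U(s+1)-\tau_U(s)\bigr)+\bigl(\tau_V(s+1)-\tau_V(s)\bigr).
\]
Regrouping the right-hand side gives exactly \(\Theta(s+1)-\Theta(s)\) for every \(s\geq1\).

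\emph{The base value.} Since \(U(1)=V(1)=1\) (because \(Q(1)=Q(2)=1\)), we have \(\tau_U(1)=\tau_V(1)=1\), hence \(\Theta(1)=2\).

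\emph{Telescoping to a dyadic level.} Summing the difference formula for \(s=1,\ldots,2^K-1\) gives
\[
 \Theta(2^K)=\Theta(1)+\mathcal N(2^K-1).
\]
Substituting the cumulative identity \eqref{eq:cumulative-dyadic} yields
\[
 \Theta(2^K)=2+4(2^K-1)-K=2^{K+2}-K-2.
\]
For \(K=0\) the sum is empty and the formula reads \(\Theta(1)=4-0-2=2\), which agrees with the base value.

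\emph{Checks.} There is no real obstacle. The only points to verify are the base value \(\Theta(1)=2\) and the \(K=0\) edge case, both handled above. As a numerical sanity check, take \(K=1\): from the first terms, \(U=1,1,2,\ldots\) and \(V=1,2,\ldots\), so \(\tau_U(2)=3\) and \(\tau_V(2)=2\). This gives \(\Theta(2)=5\), matching \(2^3-1-2=5\).
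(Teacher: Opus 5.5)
Your proposal is correct and follows essentially the same route as the paper: the difference formula is the second equality of the plateau identity regrouped, and the dyadic value comes from \(\Theta(1)=2\) together with the cumulative identity \eqref{eq:cumulative-dyadic}. Your explicit \(K=0\) and \(K=1\) checks are consistent with the initial values and add nothing that needs correcting.
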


\begin{proof}
The first identity is Lemma~\ref{lem:plateau-identity}.  Since
\(\Theta(1)=2\), equation~\eqref{eq:cumulative-dyadic} gives
\[
 \Theta(2^K)=2+\mathcal N(2^K-1)=2^{K+2}-K-2.
\]
\end{proof}

\begin{remark}[What the theorem does not determine]
The unique block maximum \(k+3\) exists, but its location is not determined by
the multiset proof.  Nor does the theorem give a formula for a partial sum
\(\mathcal N(M)\) at a non-dyadic endpoint.  Figure~\ref{fig:frequency-blocks} shows
substantial positional structure, but explaining that ordered structure is a
separate problem.
\end{remark}

\begin{figure}[t]
  \centering
  \includegraphics[width=.94\textwidth]{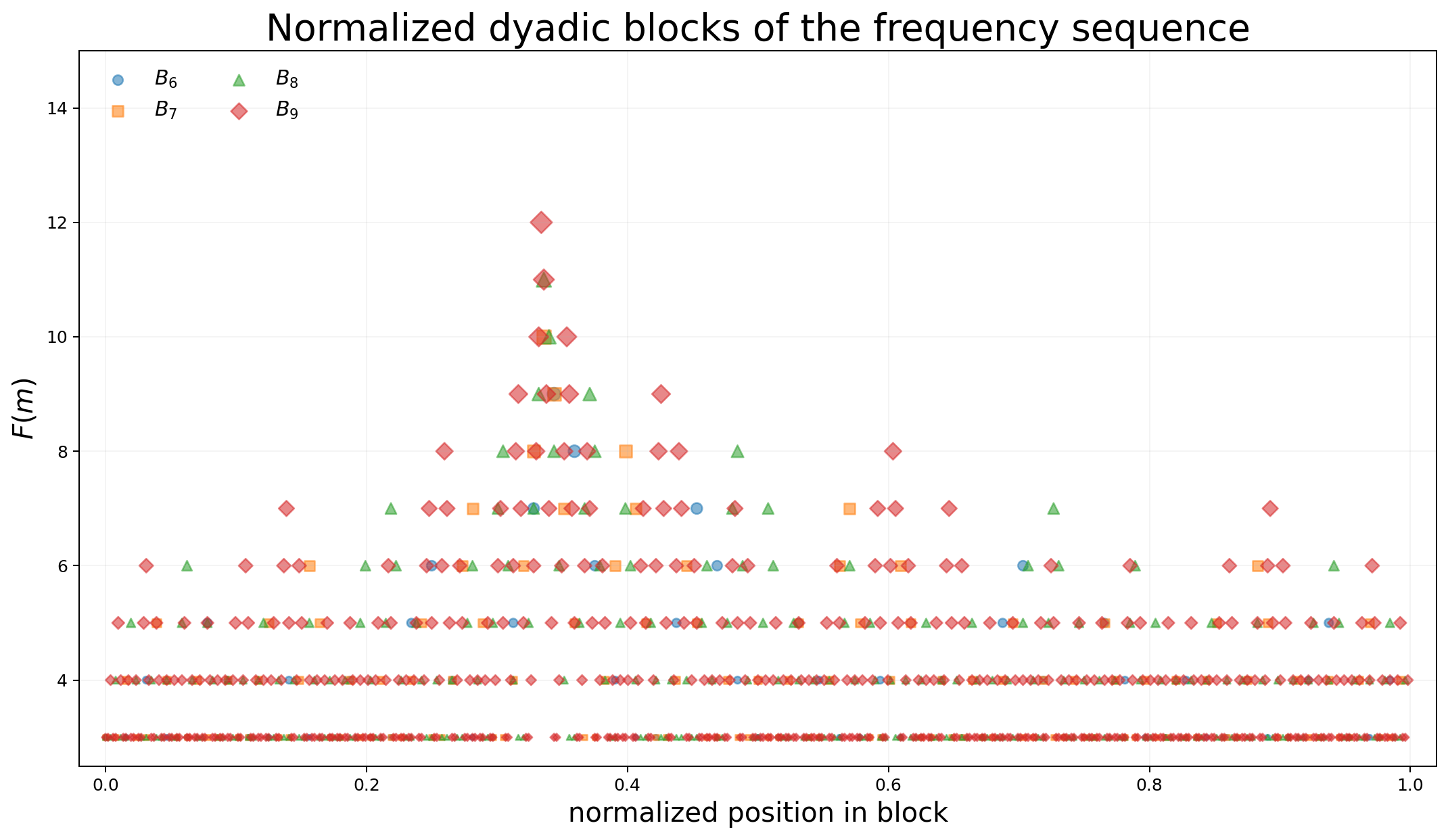}
  \caption{Frequency profiles in four dyadic blocks.  Theorem
  \ref{thm:intro-frequency} determines the vertical multiplicities in each
  panel, not the horizontal placement of the peaks.}
  \label{fig:frequency-blocks}
\end{figure}

\section{An algebraic clock and the dyadic defect}
\label{sec:clock}

This section records algebraic identities only; they are not used in the
frequency proof.  Define the clock
\begin{equation}
 t(n)=n-Q(n-1)\qquad(n\geq2).
 \label{eq:clock}
\end{equation}

\begin{proposition}[Clock identities]
\label{prop:clock}
For \(m\geq1\),
\begin{equation}
 Q(m)=m+1-t(m+1).
 \label{eq:clock-reconstruct}
\end{equation}
For \(n\geq3\), the clock along the canonical orbit satisfies
\begin{align}
 t(n+1)={}&n-2-t(n)-t(n-1)
 \nonumber\\
 &+t\bigl(t(n)+1\bigr)+t\bigl(t(n-1)+2\bigr)-(-1)^n.
 \label{eq:clock-recurrence}
\end{align}
Moreover, for the dyadic defect
\begin{equation}
 R(n)=Q(2n)-2Q(n),
 \label{eq:defect}
\end{equation}
one has
\begin{equation}
 R(n)=2t(n+1)-t(2n+1)-1.
 \label{eq:defect-clock}
\end{equation}
In particular, \(R(n)\) is odd.
\end{proposition}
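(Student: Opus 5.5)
The plan is to obtain all three assertions of Proposition~\ref{prop:clock} by direct algebraic substitution from the definition \eqref{eq:clock} and the recurrence \eqref{eq:recurrence}. We use only global well-definedness, from Proposition~\ref{prop:fundamental-properties}, and the oddness of every value.

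First, \eqref{eq:clock-reconstruct} is immediate. With \(n=m+1\geq2\), the definition reads \(t(m+1)=m+1-Q(m)\), and we solve for \(Q(m)\).

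For the clock recurrence \eqref{eq:clock-recurrence}, write \(t(n+1)=n+1-Q(n)\) and expand \(Q(n)\) by \eqref{eq:recurrence}. The arguments of the two recursive calls are \(n-Q(n-1)=t(n)\) and \(n-Q(n-2)=t(n-1)+1\). Applying \eqref{eq:clock-reconstruct} with \(m=t(n)\) and with \(m=t(n-1)+1\) gives
\[
 Q(t(n))=t(n)+1-t(t(n)+1),
 \qquad
 Q(t(n-1)+1)=t(n-1)+2-t(t(n-1)+2).
\]
Substituting both and collecting constants gives
\[
 t(n+1)=n+1-(t(n)+1)-(t(n-1)+2)+t(t(n)+1)+t(t(n-1)+2)-(-1)^n,
\]
which is \eqref{eq:clock-recurrence}.

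The only point requiring care is legality. We need \(t(n)\geq1\) and \(t(n-1)+1\geq1\) so that \eqref{eq:clock-reconstruct} applies at those arguments. This is exactly the legality of recursive arguments guaranteed by Proposition~\ref{prop:fundamental-properties}, and we also use \(Q(n)\leq n\). Hence the restriction to \(n\geq3\) along the canonical orbit.

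For the defect identity \eqref{eq:defect-clock}, we apply \eqref{eq:clock-reconstruct} at \(m=n\) and at \(m=2n\):
\[
 Q(2n)=2n+1-t(2n+1),
 \qquad
 2Q(n)=2n+2-2t(n+1).
\]
Subtracting gives \(R(n)=2t(n+1)-t(2n+1)-1\).

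Oddness of \(R(n)\) then follows without the clock: \(Q(2n)\) is odd and \(2Q(n)\) is even, so \(R(n)\) is odd. Equivalently, \(t(2n+1)=2n+1-Q(2n)\) is even, and we can read parity off \eqref{eq:defect-clock}.

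No step is a genuine obstacle. The one place to be careful is the index bookkeeping in the second step: the recursive call \(Q(n-Q(n-2))\) has argument \(t(n-1)+1\), not \(t(n-1)\), and this shift is what produces the \(+2\) inside the second nested clock term.
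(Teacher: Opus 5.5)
Your proposal is correct and is essentially the paper's proof. Both rearrange the definition of $t$, substitute \eqref{eq:clock-reconstruct} at the two recursive arguments $t(n)$ and $t(n-1)+1$, apply the same identity at $m=n$ and $m=2n$ to get the defect formula, and deduce that $R(n)$ is odd because every $Q$-value is odd.

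Your explicit legality check, that $t(n)\geq1$ follows from $Q(n-1)\leq n-1$, is a harmless addition the paper leaves implicit. One small correction: the restriction $n\geq3$ comes from the range of the recurrence and from $t(n-1)$ requiring $n-1\geq2$, not from legality.
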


\begin{proof}
Equation~\eqref{eq:clock-reconstruct} is a rearrangement of the definition.
The two recursive arguments in \eqref{eq:recurrence} are
\[
 n-Q(n-1)=t(n),
 \qquad
 n-Q(n-2)=t(n-1)+1.
\]
Substitute \eqref{eq:clock-reconstruct} at those two arguments and use
\(t(n+1)=n+1-Q(n)\).  This gives \eqref{eq:clock-recurrence}.  Applying
\eqref{eq:clock-reconstruct} to \(Q(2n)\) and \(Q(n)\) gives
\eqref{eq:defect-clock}.  Finally, every \(Q(n)\) is odd, so an odd number
minus an even number is odd.
\end{proof}

For the centered fluctuation function \(E(n)=Q(n)-n/2\), one has
\begin{equation}
 R(n)=E(2n)-2E(n).
 \label{eq:defect-cocycle}
\end{equation}
Cloitre's bound therefore implies
\begin{equation}
 R(n)=O\!\left(\frac{n}{\sqrt{\log n}}\right).
 \label{eq:defect-bound}
\end{equation}
No bounded, unscaled log-periodic defect is asserted here.

\subsection*{Exact finite computations}

To retain a scale-sensitive numerical summary, define
\begin{equation}
 M_k=\max_{2^k\leq n<2^{k+1}}
 \frac{|R(n)|\sqrt{\log_2 n}}{n}.
 \label{eq:Mk}
\end{equation}
The following values were obtained by one forward evaluation of the integer
recursion through index \(2^{22}\); no interpolation or smoothing was used.

\begin{table}[ht]
\centering
\begin{tabular}{rrrr}
\toprule
\(k\) & \(M_k\) & maximizing \(n\) & \(R(n)\)\\
\midrule
12 & 0.5911485 & 7,830     & 1,287\\
14 & 0.5854757 & 31,584    & 4,783\\
16 & 0.5803729 & 118,256   & 16,719\\
18 & 0.5781764 & 475,720   & 63,335\\
20 & 0.5761864 & 1,912,446 & 241,225\\
\bottomrule
\end{tabular}
\caption{Selected normalized dyadic maxima of the defect.}
\label{tab:defect}
\end{table}

The slow variation in Table~\ref{tab:defect} is consistent with a nontrivial
\(n/\sqrt{\log n}\) scale.  It is evidence only: neither convergence of
\(M_k\) nor periodicity in \(k\) is claimed.

\section{Alternative initial triples}
\label{sec:seeds}

When three initial values are prescribed, the recurrence must be understood
as starting at \(n=4\).  Under that convention there is a simple rigid orbit.

\begin{proposition}[A noncanonical rigid orbit]
\label{prop:rigid-seed}
Prescribe
\(Q(1)=1,Q(2)=1,Q(3)=2\), and apply \eqref{eq:recurrence} for \(n\geq4\).
Then, for every \(m\geq2\),
\begin{equation}
 Q(2m)=2m,
 \qquad
 Q(2m+1)=2.
 \label{eq:rigid-orbit}
\end{equation}
\end{proposition}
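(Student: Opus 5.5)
Let me verify: seed Q(1)=1,Q(2)=1,Q(3)=2. For n=4: Q(4)=Q(4-Q(3))+Q(4-Q(2))+1 = Q(2)+Q(3)+1 = 1+2+1 = 4, which matches $2m$ with $m=2$. For n=5: Q(5)=Q(5-4)+Q(5-2)-1 = Q(1)+Q(3)-1 = 1+2-1 = 2, which matches.

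The proof is a strong induction on $m\geq2$ showing that the pair $\bigl(Q(2m),Q(2m+1)\bigr)=(2m,2)$. The base case $m=2$ is a direct computation from the prescribed triple. For $n=4$ the recursive arguments are $4-Q(3)=2$ and $4-Q(2)=3$, so $Q(4)=Q(2)+Q(3)+1=4$. For $n=5$ they are $5-Q(4)=1$ and $5-Q(3)=3$, so $Q(5)=Q(1)+Q(3)-1=2$. It is worth noting in passing that all recursive arguments are legal earlier indices, so the orbit is well defined.

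For the inductive step, assume \eqref{eq:rigid-orbit} holds for all indices below $2m+2$, with $m\geq2$, so that $Q(2m)=2m$ and $Q(2m+1)=2$. At the even index $n=2m+2$ the arguments are
\[
 n-Q(n-1)=2m+2-2=2m,
 \qquad
 n-Q(n-2)=2m+2-2m=2.
\]
The sign is $+1$, so $Q(2m+2)=Q(2m)+Q(2)+1=2m+1+1=2m+2$. At the odd index $n=2m+3$ the arguments are
\[
 2m+3-Q(2m+2)=1,
 \qquad
 2m+3-Q(2m+1)=2m+1.
\]
The sign is $-1$, so $Q(2m+3)=Q(1)+Q(2m+1)-1=1+2-1=2$. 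Here $Q(2m+1)=2$ comes from the induction hypothesis; when $m=2$ this is $Q(5)=2$ from the base case, so it holds uniformly for $m\geq2$.

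The only point requiring care is bookkeeping the boundary: the values $Q(1)=Q(2)=1$ and $Q(3)=2$ are not of the form \eqref{eq:rigid-orbit}, yet they are reused at every step, through $Q(2)$ and $Q(1)$. I would therefore state the induction as covering the prescribed initial values together with all indices $4\leq n'<n$. With that stated, there is no real obstacle; the step is just the two-line substitution above.
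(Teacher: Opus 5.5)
Your proof is correct and follows the same argument as the paper: the direct base computation of $Q(4)=4$ and $Q(5)=2$, then induction via $Q(2m+2)=Q(2m)+Q(2)+1$ and $Q(2m+3)=Q(1)+Q(2m+1)-1$. The paper also remarks that the inductive-step arguments $2m,2,1,2m+1$ are positive and earlier than the current index; you checked this only for the base case, but it is immediate here as well.
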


\begin{proof}
Directly,
\[
 Q(4)=Q(2)+Q(3)+1=4,
 \qquad
 Q(5)=Q(1)+Q(3)-1=2.
\]
Assume the asserted formulas through \(2m+1\).  Then
\begin{align*}
 Q(2m+2)
 &=Q\bigl(2m+2-Q(2m+1)\bigr)
   +Q\bigl(2m+2-Q(2m)\bigr)+1\\
 &=Q(2m)+Q(2)+1=2m+2,
\end{align*}
and
\begin{align*}
 Q(2m+3)
 &=Q\bigl(2m+3-Q(2m+2)\bigr)
   +Q\bigl(2m+3-Q(2m+1)\bigr)-1\\
 &=Q(1)+Q(2m+1)-1=2.
\end{align*}
All recursive arguments displayed here are positive and earlier than the
current index, so the induction is legal.
\end{proof}

For context only, Table~\ref{tab:seeds} records a finite experiment over
small triples.  ``Defined'' means that all recursive arguments remained legal
through \(N=10^6\); it is not a global-existence claim.

\begin{table}[ht]
\centering
\begin{tabular}{lll}
\toprule
Initial triple & status through \(10^6\) & observed relation\\
\midrule
\((1,1,1)\) & defined & canonical orbit\\
\((2,1,1),(3,1,1)\) & defined & canonical values from index \(2\) on\\
\((1,3,3)\) & defined & two-index shift of the canonical data\\
\((1,1,2)\) & defined & rigid orbit, Proposition~\ref{prop:rigid-seed}\\
\((1,3,1),(2,1,2)\) & defined & no simple relation detected\\
\((2,2,1),(2,3,1),(3,1,2)\) & defined & no simple relation detected\\
\bottomrule
\end{tabular}
\caption{Selected initial triples, with the recurrence started at \(n=4\).}
\label{tab:seeds}
\end{table}

The table is deliberately not called a classification.  Proving global
existence or eventual equivalence for noncanonical triples requires separate
arguments.

\section{Conclusion and open problems}

The value frequencies of the parity-perturbed Hofstadter recursion have an
exact dyadic multiplicity law.  The decisive change of viewpoint is to pass
from the nested recurrence to the plateau local times of its two slow
subsequences.  The half-arch midpoint cuts then become literal reflection
folds.  Cloitre's canonical forests supply the tree types, while
Lemma~\ref{lem:paired-degrees} supplies the central boundary term.  The result
is the ruler-function multiset in Theorem~\ref{thm:intro-frequency}.

Three questions appear especially natural.
\begin{enumerate}[leftmargin=2.2em]
\item Determine the \emph{ordered} frequency word in \(\Bcal_k\), including
      the location of its unique maximum.
\item Determine the sharp normalized extremal behavior of
      \(R(n)=Q(2n)-2Q(n)\).
\item Use Cloitre's Gaussian layer asymptotics for canonical arch forests
      \cite[Sections~28--30]{cloitre2026mantovanelli} to prove a functional
      limit theorem for normalized arch or defect profiles.
\end{enumerate}
All three require positional information that is intentionally discarded by
the multiset fold.

\enlargethispage{2\baselineskip}
\section*{Data and code availability}

All data are generated deterministically from \eqref{eq:recurrence}.  A
versioned copy of the verification software is permanently archived on
Zenodo \cite{mantovanelli2026code}.  The release contains
\path{frequency_law_checks.py}, for the proof audit, and
\path{verify_numerics.py}, for Tables~\ref{tab:defect} and~\ref{tab:seeds}.
The same files are included at the top level of the accompanying source
package and its archive \path{Dyadic_Frequency_Law_source.zip}.

\section*{Acknowledgments}

The author gratefully acknowledges Beno{\^\i}t Cloitre's foundational proof
of global well-definedness and the structural arch-and-forest theory on which
the present frequency argument builds.

\paragraph{Disclosure of generative AI use.}
During the preparation of this manuscript, OpenAI's ChatGPT assisted with
drafting and revising text, LaTeX preparation, computational code, and
preliminary consistency checks of mathematical arguments.  The author
subsequently reviewed, edited, and verified all mathematical statements,
proofs, computations, citations, and final wording, and takes full
responsibility for the content of the manuscript.

\appendix
\section{Finite verification protocol}
\label{app:verification}

Finite computation is not used in any proof above.  It was nevertheless used
as an index and orientation audit.  The top-level ancillary file
\texttt{frequency\_law\_checks.py} performs the following independent checks.
\begin{enumerate}[leftmargin=2.2em]
\item It generates \(Q\) until both slow subsequences have passed every level
      being counted, so no frequency is truncated.
\item It checks the frequency multiset against the ruler multiset through a
      user-selected dyadic level.
\item It constructs \(P_r,N_r\) directly from the two exact interleavings,
      reconstructs zero gaps, and checks
      Lemmas~\ref{lem:interior-plateau} and \ref{lem:boundary-plateau}
      with their displayed order and endpoint values.
\item It checks the exact gap transform \eqref{eq:degree-transform}, every
      reflection pair and central pair in Lemma~\ref{lem:paired-degrees}, the
      canonical tree decompositions, and the odd/even forest-annulus
      multisets as redundant diagnostics.
\end{enumerate}
The command
\begin{verbatim}
python frequency_law_checks.py --max-k 18 --max-arch 8
\end{verbatim}
checks all dyadic frequency blocks through \(\Bcal_{18}\) and all orbital word
identities through arch \(r=8\).  The script reports the largest generated
index, making completeness of the finite frequency counts auditable.
The separate command
\begin{center}
\texttt{python verify\_numerics.py}
\end{center}
reproduces Table~\ref{tab:defect} and checks every seed status reported in
Table~\ref{tab:seeds} through \(10^6\).

\bibliographystyle{plain}
\bibliography{references}

@book{hofstadter1979,
  author    = {Douglas R. Hofstadter},
  title     = {G{\"o}del, Escher, Bach: An Eternal Golden Braid},
  publisher = {Basic Books},
  year      = {1979}
}

@article{conolly1989,
  author  = {Brian W. Conolly},
  title   = {{Meta-Fibonacci} sequences},
  journal = {Fibonacci Quarterly},
  volume  = {27},
  year    = {1989},
  pages   = {298--302}
}

@article{tanny1992,
  author  = {Stephen M. Tanny},
  title   = {A well-behaved cousin of the {Hofstadter} sequence},
  journal = {Discrete Mathematics},
  volume  = {105},
  year    = {1992},
  pages   = {227--239}
}

@article{jackson2006,
  author  = {Brad Jackson and Frank Ruskey},
  title   = {{Meta-Fibonacci} sequences, binary trees and extremal compact codes},
  journal = {Electronic Journal of Combinatorics},
  volume  = {13},
  number  = {1},
  year    = {2006},
  pages   = {R26},
  doi     = {10.37236/1052}
}

@inproceedings{deugau2006,
  author    = {Chris Deugau and Frank Ruskey},
  title     = {Complete $k$-ary trees and generalized {Meta-Fibonacci} sequences},
  booktitle = {DMTCS Proceedings, Fourth Colloquium on Mathematics and Computer Science: Algorithms, Trees, Combinatorics and Probabilities},
  year      = {2006},
  pages     = {203--214},
  doi       = {10.46298/dmtcs.3514}
}

@article{fox2022,
  author  = {Nathan Fox},
  title   = {Connecting slow solutions to nested recurrences with linear recurrent sequences},
  journal = {Journal of Difference Equations and Applications},
  volume  = {28},
  number  = {11--12},
  year    = {2022},
  pages   = {1458--1491},
  doi     = {10.1080/10236198.2022.2152335}
}

@article{ruskeydeugau2009,
  author  = {Frank Ruskey and Chris Deugau},
  title   = {The combinatorics of certain $k$-ary {Meta-Fibonacci} sequences},
  journal = {Journal of Integer Sequences},
  volume  = {12},
  number  = {4},
  year    = {2009},
  pages   = {Article 09.4.3}
}

@article{balamohan2008,
  author  = {B. Balamohan and A. Kuznetsov and S. Tanny},
  title   = {On the behaviour of a variant of {Hofstadter}'s $Q$-sequence},
  journal = {Journal of Integer Sequences},
  volume  = {11},
  number  = {2},
  year    = {2008},
  pages   = {Article 08.2.8}
}

@misc{oeisA394051,
  author       = {{N. J. A. Sloane and The OEIS Foundation Inc.}},
  title        = {{A394051}: {Hofstadter} {$Q$}-sequence disturbed by $(-1)^n$},
  year         = {2026},
  howpublished = {\url{https://oeis.org/A394051}},
  note         = {On-Line Encyclopedia of Integer Sequences, accessed 18 July 2026}
}

@misc{cloitre2026mantovanelli,
  author        = {Beno{\^\i}t Cloitre},
  title         = {{The Mantovanelli-Hofstadter Sequence}},
  year          = {2026},
  eprint        = {2604.06237v2},
  archivePrefix = {arXiv},
  primaryClass  = {math.NT},
  howpublished  = {\href{https://arxiv.org/abs/2604.06237v2}{arXiv:2604.06237v2 [math.NT]}},
  note          = {16 July 2026},
  url           = {https://arxiv.org/abs/2604.06237v2}
}

@misc{mantovanelli2026finite,
  author        = {Marco Mantovanelli},
  title         = {{Certified Finite-State Induction for a Perturbed Hofstadter Recursion}},
  year          = {2026},
  eprint        = {2603.29622v2},
  archivePrefix = {arXiv},
  primaryClass  = {math.CO},
  howpublished  = {\href{https://arxiv.org/abs/2603.29622v2}{arXiv:2603.29622v2 [math.CO]}},
  url           = {https://arxiv.org/abs/2603.29622v2}
}

@misc{mantovanelli2026code,
  author       = {Marco Mantovanelli},
  title        = {{Code for A Dyadic Frequency Law for a Perturbed Hofstadter $Q$-Recursion}},
  year         = {2026},
  howpublished = {Zenodo, version 1.0.0},
  note         = {\href{https://doi.org/10.5281/zenodo.21430925}
                  {doi:10.5281/zenodo.21430925}},
  doi          = {10.5281/zenodo.21430925},
  url          = {https://doi.org/10.5281/zenodo.21430925}
}

\end{document}